\newtheorem{theorem}{Theorem}[section]
\newtheorem{lemma}[theorem]{Lemma}
\newtheorem{proposition}[theorem]{Proposition}
\newtheorem{corollary}[theorem]{Corollary}
\newtheorem*{theorem*}{Theorem}
\theoremstyle{remark}
\newtheorem{remark}[theorem]{Remark}
\newtheorem{definition}[theorem]{Definition}
\newtheorem{example}[theorem]{Example}
\numberwithin{equation}{section}
\begin{document}
\title[Prime and primitive Kumjian-Pask algebras]{Prime and primitive Kumjian-Pask algebras}

\author[M. Kashoul-Radjabzadeh, H. Larki, and A. Aminpour]{Maryam Kashoul-Radjabzadeh, Hossein Larki, Abdolmohammad Aminpour}

\address{Department of Mathematics\\
Faculty of Mathematical  Sciences and Computer\\
Shahid Chamran University of Ahvaz\\
Iran}
\email{m-rajabzadeh@phdstu.scu.ac.ir, h.larki@scu.ac.ir, aminpour@scu.ac.ir}


\date{\today}

\subjclass[2010]{16W50, 16L05}

\keywords{Higher-rank graph, Kumjian-Pask algebra, prime ideal, primitive ideal}

\begin{abstract}
In this paper, prime as well as primitive Kumjian-Pask algebras $\mathrm{KP}_R(\Lambda)$ of a row-finite $k$-graph $\Lambda$ over a unital commutative ring $R$ are completely characterized in graph-theoretic and algebraic terms. By applying quotient $k$-graphs, these results describe prime and primitive graded basic ideals of Kumjian-Pask algebras. In particular, when $\Lambda$ is strongly aperiodic and $R$ is a field, all prime and primitive ideals of a Kumjian-Pask algebra $\mathrm{KP}_R(\Lambda)$ are determined.
\end{abstract}

\maketitle

\section{Introduction}

For giving a generalization and graphical version of higher dimensional Cuntz-Krieger algebras \cite{steg1}, Kumjian and Pask in \cite{kumj00} introduced higher rank graphs (or $k$-graphs) and their relative $C^*$-algebras. The class of higher rank graph $C^*$-algebras naturally includes graph $C^*$-algebras and every (directed) graph may be considered as a $1$-graph. Since then, there have been great efforts to investigate the structure of higher rank graph $C^*$-algebras. (See \cite{raeburn,Robert,kang} among others.)

A Leavitt path algebra $L_{R}(E)$ is an algebraic analogue of graph $C^*$-algebras $C^*(E)$, that was first introduced in \cite{abr1,ara} for a row-finite (directed) graph $E$ over a field $R$ and then extended in \cite{abr2} and \cite{tom1} for every graph $E$ and unital commutative ring $R$. Leavitt path algebras are so called because they also generalize the algebras $L(1,n)$ without invariant basis number studied by Leavitt in \cite{lea}. Despite the similarities in the definitions, there are some differences between a graph $C^*$-algebra $C^*(E)$ and the Leavitt path algebra $L_{R}(E)$ that one should take notice:
\begin{enumerate}
\item The coefficients of $C^*(E)$ belong to $\mathbb{C}$, but the coefficients of $L_{R}(E)$ belong to a unital commutative ring $R$.
\item Many similar results for these classes of algebras have been obtained independently by different techniques so that each one may not be applied for the others.
\item Even in the case $R=\mathbb{C}$, the algebras $C^*(E)$ and $L_{R}(E)$ may have different structural properties. For example, if $E$ is a graph with one vertex and one edge, $C^*$-algebra $C^*(E)$ (= C( $\mathbb{T}$)) is not prime where as $L_{\mathbb{C}}(E)$($=\mathbb{C}[x,x^{-1}]$) is a prime ring.  Thus in many cases one can not simultaneously investigate both graph $C^*$-algebras and Leavitt path algebras.
\end{enumerate}

Let $\Lambda$ be a row-finite $k$-graph without sources and $R$ be a unital commutative ring. In \cite{aranda} the authors introduced a $\mathbb{Z}^k$- graded algebra $\mathrm{KP}_{R}(\Lambda)$ which is called the Kumjian-Pask algebra. Kumjian-Pask algebras are the algebraic analogue of the $k$-graph $C^{\ast}$-algebras \cite{kumj00}. They then proved two kinds of uniqueness theorems, called the graded uniqueness theorem and the Cuntz- Krieger  uniqueness theorem and analyzed the ideal structure.

More recently in \cite{clark} Kumjian-Pask algebras were defined for arbitrary row-finite $k$-graphs being ``locally convex". It is shown that the desourcifying method could extend the results of \cite{aranda} for these algebras.

Recall that if $E$ is a directed graph and $R$ is a field, prime ideals as well as primitive ideals of $L_{R}(E)$ are described in \cite{ara3,ran}, via special subsets of the vertex set so-called maximal tails. First, the prime and primitive Leavitt-path algebras are determined by underlying graphs. So, prime graded ideals and primitive graded ideals of a Leavitt-path algebra are characterized by applying quotient graphs. Then it is shown that non graded ones are corresponding with prime ideals in $K[x,x^{-1}]$ and special families of maximal tails. However, if $R$ is not a field, the spaces of prime ideals and primitive ideals of $L_{R}(E)$ are more complicated.

The aim of this article is to investigate prime and primitive Kumjian-Pask algebras $\mathrm{KP}_{R}(\Lambda)$. We will allow the coefficient ring $R$ to be an arbitrary unital commutative ring rather than a field and also $\Lambda$ to be a locally convex row-finite $k$-graph. We first in Section $2$ review some basic definitions and results of higher rank graphs and their Kumjian-Pask algebras from \cite{aranda,clark}. Then in Section $3$, we provide equivalent conditions for the primeness and the primitivity of Kumjian-Pask algebra $\mathrm{KP}_{R}(\Lambda)$ of a row-finite $k$-graph $\Lambda$ with no sources. Finally, in Section 4, we apply the desourcifying method due to Farthing \cite{Far} to extend our main results to any row-finite locally convex $k$-graph with possible sources. In particular, the prime and primitive basic graded ideals of a Kumjian-Pask algebra $\mathrm{KP}_{R}(\Lambda)$ are described. Notice that when $R$ is a field and $\Lambda$ is strongly aperiodic, these results determine all prime and primitive ideals of $\mathrm{KP}_{R}(\Lambda)$.


\section{Preliminaries}

In this section, we recall some preliminaries about higher rank graphs and Kumjian-Pask algebras which will be needed in the next sections. We refer the reader to \cite{kumj00,raeburn,aranda,clark} for more details.

We denote the set of all natural numbers including zero by $\Bbb N$. Let $k$ be a positive integer. For $m,n\in \Bbb N^{k}$, we write $m\leq n$ if $m_{i}\leq n_{i}$ for every $1\leq i\leq k$ and write $m\vee n$ for the pointwise maximum of $m$ and $n$. We usually denote the zero element $(0,\ldots,0)\in \Bbb N^{k}$ by $0$.

\begin{definition}
For a positive integer $k$, a\emph{ k-graph} (or \emph{higher rank graph}) is a countable category $\Lambda=(\Lambda^{0},\Lambda,r,s)$ equipped with a functor $d:\Lambda\rightarrow\Bbb N^{k}$, called the $\emph{degree map}$, satisfying the following $\emph{factorization property}$:
for every $\lambda\in \Lambda$ and $m,n\in \Bbb N^{k}$ with $d(\lambda)=m+n$, there exist unique elements $\mu,\nu\in \Lambda$ such that $\lambda=\mu\nu$ and $d(\mu)=m$, $d(\nu)=n$.
\end{definition}

\begin{example}
For every ordinary directed graph $E=(E^0,E^1,r,s)$, the path category is naturally a $1$-graph. In this $1$-graph, $\Lambda^{0}$ is the set of vertices $E^{0}$ and the set of morphisms are the finite path space $\mathrm{Path}(E)$ of $E$. Also, the degree map $d:\mathrm{Path}(E)\rightarrow \Bbb N$ may be defined by $d(\mu)=|\mu|$ (the length of $\mu$).
\end{example}

\begin{example}
Let $\Omega_{k}^{0}:=\Bbb N^{k}$ and $\Omega_{k}:=\{(p,q)\in\Bbb N^{k}\times\Bbb N^{k}: p\leq q\}$. Define $r,s :\Omega_{k}\rightarrow\Omega_{k}^{0}$ and $d:\Omega_{k}\rightarrow \Bbb N^{k}$ by
$s(p,q):=q$, $r(p,q):=p$, $d(p,q):=q-p$. Then two pairs $(p,q)$ and $(r,s)$ are composable if $q=r$ and $(p,q)(r,s)=(p,s)$. By this definition $(\Omega_{k},r,s,d)$ is a $k$-graph. Moreover, for any $m\in \Bbb N^{k}$, we may define $\Omega_{k,m}^{0}:=\{p\in \Bbb N^{k} : p\leq m\}$ and $\Omega_{k,m}:=\{(p,q)\in\Bbb N^{k}\times\Bbb N^{k}: p\leq q\leq m\}$. With the same definition for $r,s$ and $d$ as above, $(\Omega_{k,m},r,s,d)$ would be a $k$-graph.
\end{example}

For $v,w\in \Lambda^{0}$ and $n\in \Bbb N^{k}$, we define $v\Lambda w:=\{\mu\in \Lambda : s(\mu)=w, r(\mu)=v\}$ and $\Lambda^{n}:=\{\mu\in \Lambda : d(\mu)=n\}$. Then by the factorization property we may identify $\Lambda^{0}$ with the objects of $\Lambda$ and so the elements of $\Lambda^{0}$ are called \emph{vertices}.

\begin{definition}
A $k$-graph $\Lambda$ is \emph{row-finite} if the set $v\Lambda^{n}=\{\mu\in\Lambda^{n}  : r(\mu)=v\}$ is finite for every $v\in\Lambda^{0}$ and $n\in \Bbb N^{k}$. Also, we say $\Lambda$ has \emph{no sources} if $v\Lambda^{n}\neq\emptyset$ for every $v\in\Lambda^{0}$ and $n\in\Bbb N^{k}$. A $k$-graph $\Lambda$ is called \emph{locally convex}, if for every $v\in \Lambda^{0}$ and distinct $i,j\in \{1,2,\ldots,k\}$, $\lambda\in v\Lambda^{e_{i}}$ and $\mu\in v\Lambda^{e_{j}}$, the sets $s(\lambda)\Lambda^{e_{j}}$ and $s(\mu)\Lambda^{e_{i}}$ are nonempty. Note that if a $k$-graph $\Lambda$ has no sources, it is trivially locally convex.
\end{definition}

For $n\in \Bbb N^{k}$, we set
$$\Lambda^{\leq n}:=\{\lambda\in \Lambda : d(\lambda)\leq n, ~\mathrm{and}~~ s(\lambda)\Lambda^{e_{i}}=\emptyset ~~\mathrm{whenever}~~ d(\lambda)+e_{i}\leq n\}.$$
If $\Lambda$ has no sources, we have $\Lambda^{\leq n}=\Lambda^{n}$.

\begin{definition}
Let $\Lambda$ be a row-finite locally convex $k$-graph and $m\in (\Bbb N\cup \{\infty\})^{k}$. A degree preserving functor $x: \Omega_{k,m}\rightarrow\Lambda$ is called a \emph{boundary path} of degree $m$ if for every $p\in \Bbb N^{k}$ and $i\in\{1,2,\ldots,k\}$, $p\leq m$ and $p_{i}=m_{i}$ imply that $x(p)\Lambda^{e_{i}}=\emptyset$. We denote the set of boundary paths by $\Lambda^{\leq \infty}$. If $x\in \Lambda^{\leq\infty}$, we usually call $x(0)\in \Lambda^0$ the range of $x$ and write it by $r(x)$. The degree of $x$ is denoted by $d(x)$.

For every $x\in \Lambda^{\leq\infty}$ and $n\in \Bbb N^{k}$ with $n\leq d(x)$, there is a boundary path $\sigma^{n}(x)$ of degree $d(x)-n$ which is defined by $\sigma^{n}(x)(p,q) :=x(p+n,q+n)$ for $p\leq q\leq d(x)-n$. Note that for every $p\leq d(x)$, we have $x(0,p)\sigma^p(x)=x$. Also, if $\lambda\in\Lambda x(0)$, there is a unique boundary path $\lambda x:\Omega_{k,m+d(\lambda)}\rightarrow\Lambda$ such that $\lambda x (0,d(\lambda))=\lambda$, $(\lambda x)(d(\lambda), d(\lambda) +p)=x(0,p)$. If $\Lambda$ has no sources, every boundary path is an infinite path; so, in this case, we denote $\Lambda^\infty=\Lambda^{\leq \infty}$.
\end{definition}

\begin{definition}
Let $\Lambda$ be a row-finite $k$-graph. we say that $\Lambda$ is \emph{aperiodic} if for every $v\in \Lambda^{0}$ and each $m\neq n\in \Bbb N^{k}$, there is $x\in \Lambda^{\leq\infty}$ such that either $m-m\wedge d(x) \neq n-{n\wedge d(x)}$ or $\sigma^{m\wedge d(x)}(x)\neq\sigma^{n\wedge d(x)}(x)$. We say $\Lambda$ is $\emph{periodic}$ if is not aperiodic.
\end{definition}

For a row-finite $k$-graph $\Lambda$ with no sources, every boundary path is an infinite path. So, for every $m,n\in\Bbb N^{k} $ and $x\in \Lambda^{\leq\infty}$, we have $m\wedge d(x)=m$, $n\wedge d(x)=n$. Hence, when $\Lambda$ has no sources, the definition of aperiodicity presented above is equivalent to the same definition given in \cite {aranda}.

Let $\Lambda$ be a $k$-graph and $\Lambda^{\neq 0}=\{\lambda\in \Lambda : d(\lambda)\neq 0\}$. For every $\lambda\in \Lambda$, we define a $\emph{ghost path}$ $\lambda^{\ast}$ and for $v\in \Lambda^{0}$, we define $v^{\ast}:=v$. The set of ghost paths are denoted by $G(\Lambda)$.
We extend $d,r,s$ to $G(\Lambda)$ by $$d(\lambda^{\ast})=-d(\lambda) ~~, ~~ r(\lambda^{\ast})=s(\lambda) ~~ ,  ~~s(\lambda^{\ast})=r(\lambda).$$
Also the composition on $G(\Lambda)$ is defined by $\lambda^{\ast}\mu^{\ast}=(\mu\lambda)^{\ast}$ for $\mu ,\lambda\in \Lambda^{\neq 0}$.

\begin{definition}
Let $\Lambda$ be a row-finite locally convex $k$-graph and $R$ be a unital commutative ring. A $\emph{Kumjian-Pask}$ $\Lambda$-$\emph{family}$ $(P,S)$ in an $R$-algebra $A$ consists of two functions $P:\Lambda^{0}\rightarrow A$ and $S:\Lambda^{\neq 0}\cup G(\Lambda^{\neq 0})\rightarrow A$ satisfying the following conditions.
\begin{enumerate}[(KP1)]
\item $\{P_{v} : v\in \Lambda^{0}\}$ is a family of mutually orthogonal idempotents.
\item for all $\lambda , \mu\in\Lambda^{\neq 0}$ with $r(\mu)=s(\lambda)$, we have $S_{\lambda}S_{\mu}=S_{\lambda\mu}$, $S_{\mu^{\ast}}S_{\lambda^{\ast}}=S_{(\lambda\mu)^{\ast}}$ and
        \begin{align*}
         &P_{r(\lambda)}S_{\lambda}=S_{\lambda}P_{s(\lambda)}=S_{\lambda},\\ &P_{s(\lambda)}S_{\lambda^{\ast}}=S_{\lambda^{\ast}}P_{r(\lambda)}=S_{\lambda^{\ast}}.
        \end{align*}
\item for all $n\in \mathbb{N}\setminus\{0\}$ and $\lambda,\mu \in \Lambda^{\leq n}$, we have $S_{\lambda^{\ast}}S_{\mu}=\delta_{\lambda,\mu}P_{s(\lambda)}$.
\item for all $v\in \Lambda^0$ and $n\in \mathbb{N}^k\setminus \{0\}$,
$$P_{v}=\sum_{\lambda\in v\Lambda^{\leq n}}S_{\lambda}S_{\lambda^{\ast}}.$$
\end{enumerate}
\end{definition}

For every locally convex $k$-graph $\Lambda$, \cite [Theorem 3.7]{clark} implies that there is an $R$-algebra $\mathrm{KP}_{R}(\Lambda)$ generated by a Kumjian-Pask $\Lambda$-family $(P,S)$ such that if $(Q,T)$ is a Kumjian-Pask $\Lambda$-family in an $R$-algebra $A$, there exists an $R$-algebra homomorphism $\pi_{Q,T}: \mathrm{KP}_{R}(\Lambda)\rightarrow \emph{A}$ such that $\pi_{Q,T}(P_{v})=Q_{v}$, $\pi_{Q,T}(S_{\lambda})=T_{\lambda}$, and $\pi_{Q,T}(S_{\lambda^{\ast}})=T_{\lambda^{\ast}}$. The $R$-algebra $\mathrm{KP}_{R}(\Lambda)$ is called the \emph{Kumjian-Pask algebra of $\Lambda$} with coefficients in $R$. A calculation shows that
\begin{equation}\label{equ2.1}
\mathrm{KP}_R(\Lambda)= \mathrm{span}_R \{s_{\alpha}s_{\beta^{\ast}} : \alpha,\beta\in \Lambda , s(\alpha)=s(\beta)\}
\end{equation}
(cf. \cite[Theorem 3.4]{aranda} and \cite[Corolary 3.4]{clark}).

Recall that a ring $\emph{R}$ is called $\mathbb{Z}^{k}$-$\emph{graded}$ if there is a collection of additive subgroups $\{R_n\}_{n\in\mathbb{Z}^{k}}$ of $\emph{R}$ such that $\emph{R}_{n_{1}}\emph{R}_{n_{2}}\subseteq R_{n_{1}+n_{2}}$ and every nonzero element $a\in \emph{R}$ can be written uniquely as finite sum $\sum_{n\in G} a_{n}$ of nonzero elements $a_{n}\in \emph{R}_{n}$. The subgroup $\emph{R}_n$ is said the \emph{homogeneous component of $\emph{R}$ of degree $n$}. In this case, an ideal $I$ of $R$ is called $\emph{graded }$ if $\{I\cap \emph{R}_{n} : n\in \mathbb{Z}^{k}\}$ is a grading of $I$. Furthermore, if $\phi :R\rightarrow S$ is a ring homomorphism between graded rings, $\phi$ is called $\emph{graded}$ if $\phi(\emph{R}_{_{n}})\subseteq \emph{S}_{n}$ for all $n\in \mathbb{Z}^{k}$.
Note that the kernel of a graded homomorphism is always a graded ideal. If $I$ is a graded ideal of ring $\emph{R}$, then the quotient $\emph{R}/I$ is naturally graded with homogeneous components $\{\emph{R}_n+I\}_{n\in\mathbb{Z}^{k}}$ and the quotient map $\emph{R}\rightarrow \emph{R}/I$ is a graded homomorphism. We see from \cite[Theorem 3.4]{aranda} and \cite[Theorem 3.7]{clark} that every Kumjian-Pask algebra $\mathrm{KP}_{\emph{R}}(\Lambda)$ is $\mathbb{Z}^k$-graded by
$$\mathrm{KP}_R(\Lambda)_{n}= \mathrm{span}_R \{s_{\alpha}s_{\beta^{\ast}} : \alpha,\beta\in \Lambda ,~~~~ d(\alpha)-d(\beta)=n\}$$
for $n\in \mathbb{Z}^k$.


\section{Prime and primitive Kumjian-Pask algebras}

Let $\Lambda$ be a row-finite $k$-graph with no sources and $\emph{R}$ be a unital commutative ring. In this section, we give some conditions for $\Lambda$ and $\emph{R}$ such that $\mathrm{KP}_{\emph{R}}(\Lambda)$ is a prime ring as well as primitive ring. We will generalize our results in Section $4$ for row-finite locally convex $k$-graphs.

Recall that an ideal $I$ of $\mathrm{KP}_{R}(\Lambda)$ is called \emph{basic} if $rp_{v}\in I$ with $r\in R\setminus \{0\}$ implies $p_{v}\in I$. An ideal $I$ of ring $R$ is called \emph{prime} if for each pair of ideals $I_{1},I_{2}$ of $R$ with $I_{1}I_{2}\subseteq I$, at least one of them is contained in $I$. Also, we say a ring $R$ is \emph{prime} if the zero ideal of $R$ is prime. Recall from \cite[Proposition II.1.4]{nas} that in a graded algebra, a graded ideal $I$ is prime if and only if for any graded ideals $I_1,I_2$ with $I_1 I_2\subseteq I$, we have $I_1\subseteq I$ or $I_2\subseteq I$.  We will use this fact in the proof of Theorem \ref{thm3.3}.

To give some necessary and sufficient conditions for the primeness of $\mathrm{KP}_{R}(\Lambda)$ in Theorem \ref{thm3.3}, we need the following lemma.

\begin{lemma}\label{vlemma1}
Let $\Lambda$ be a row-finite $k$-graph with no sources and $R$ be a unital commutative ring. If $I$ is a nonzero graded ideal of $\mathrm{KP}_{R}(\Lambda)$, then there exists $rp_{v}\in I$ for some $v\in \Lambda^{0}$ and $r\in R\setminus\{0\}$.
\end{lemma}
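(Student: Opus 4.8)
The plan is to begin with a nonzero homogeneous element of $I$ and whittle it down, using the Kumjian-Pask relations, to a scalar multiple of a single vertex idempotent. Since $I$ is graded and nonzero, some homogeneous component $I\cap \mathrm{KP}_R(\Lambda)_n$ is nonzero, so I would fix a nonzero $x\in I$ with $x\in \mathrm{KP}_R(\Lambda)_n$ for some $n\in\mathbb{Z}^k$. By \eqref{equ2.1} I may then write $x=\sum_{i=1}^m r_i\, s_{\alpha_i}s_{\beta_i^*}$ with $r_i\in R\setminus\{0\}$, $s(\alpha_i)=s(\beta_i)$, and $d(\alpha_i)-d(\beta_i)=n$ for every $i$.

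The first substantive step is to arrange that all the $\alpha_i$ share a common degree and all the $\beta_i$ share a common degree. I would set $p=\bigvee_i d(\alpha_i)$ and $q=p-n$ (note $q\geq 0$ since $p\geq d(\alpha_i)$ and $q=p-d(\alpha_i)+d(\beta_i)$). Because $\Lambda$ has no sources we have $\Lambda^{\leq \cdot}=\Lambda^{\cdot}$ and each $s(\alpha_i)\Lambda^{p-d(\alpha_i)}$ is nonempty, so (KP4) gives $p_{s(\alpha_i)}=\sum_{\lambda\in s(\alpha_i)\Lambda^{p-d(\alpha_i)}}s_\lambda s_{\lambda^*}$. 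Inserting this through (KP2), together with $s_{\lambda^*}s_{\beta_i^*}=s_{(\beta_i\lambda)^*}$, rewrites each summand as $s_{\alpha_i}s_{\beta_i^*}=\sum_\lambda s_{\alpha_i\lambda}s_{(\beta_i\lambda)^*}$, where now $d(\alpha_i\lambda)=p$ and $d(\beta_i\lambda)=p-n=q$ for every index. After collecting equal pairs and discarding any term whose coefficient cancels to zero, I obtain a representation $x=\sum_j r_j'\,s_{\gamma_j}s_{\delta_j^*}$ in which the pairs $(\gamma_j,\delta_j)$ are distinct, $d(\gamma_j)=p$, $d(\delta_j)=q$, and at least one $r_j'\neq0$ because $x\neq 0$; I relabel so that $r_1'\neq0$.

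The final step isolates this surviving term. Multiplying on the left by $s_{\gamma_1^*}$ and on the right by $s_{\delta_1}$ and applying (KP3) at the fixed degrees $p$ and $q$ yields $s_{\gamma_1^*}s_{\gamma_j}=\delta_{\gamma_1,\gamma_j}\,p_{s(\gamma_j)}$ and $s_{\delta_j^*}s_{\delta_1}=\delta_{\delta_j,\delta_1}\,p_{s(\delta_j)}$, so every cross term vanishes and only $j=1$ survives, giving $s_{\gamma_1^*}\,x\,s_{\delta_1}=r_1'\,p_v$ with $v:=s(\gamma_1)=s(\delta_1)$. Since $I$ is an ideal, $r_1'p_v\in I$ and $r_1'\in R\setminus\{0\}$, as required. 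The one external fact I would invoke is that the $R$-action on vertices is faithful, i.e. $r\,p_v=0$ forces $r=0$ (a standard property of Kumjian-Pask algebras from \cite{aranda,clark}), which guarantees that $r_1'p_v$ is a genuinely nonzero element of $I$ rather than $0$.

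I expect the main obstacle to be the bookkeeping in the middle paragraph: one must verify that the factorization property keeps the degree-equalization consistent—so that all the $\beta_i\lambda$ really land in the single degree $q=p-n$—and that combining repeated pairs cannot annihilate every term. The essential use of the hypothesis that $\Lambda$ has no sources enters precisely here, since it ensures the expansion sets $s(\alpha_i)\Lambda^{p-d(\alpha_i)}$ are nonempty and that $\Lambda^{\leq\cdot}=\Lambda^{\cdot}$, allowing (KP3) and (KP4) to be applied cleanly at the fixed degrees $p$ and $q$.
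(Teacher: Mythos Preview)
Your proof is correct and follows essentially the same approach as the paper's: pick a nonzero homogeneous element, use (KP4) to equalize the degrees of all $\alpha_i$ and all $\beta_i$ (the paper equalizes via $t=\bigvee_i d(\beta_i)$ rather than your $p=\bigvee_i d(\alpha_i)$, an immaterial difference), and then multiply on the left and right and invoke (KP3) to isolate a single term of the form $r\,p_v$. You are in fact slightly more careful than the paper, since you explicitly collect repeated pairs after the expansion and argue that some coefficient must survive because $x\neq0$, whereas the paper simply asserts $s_{(\alpha_1\lambda)^*}\,x\,s_{\beta_1\lambda}=r_1\,p_{s(\alpha_1\lambda)}$ without addressing possible coincidences among the expanded terms.
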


\begin{proof}
Suppose that $I$ is a graded ideal of $\mathrm{KP}_{\emph{R}}(\Lambda)$. Take an element $x\in I_{n}=I\cap \mathrm{KP}_{R}(\Lambda)_{n}$ for some $n\in \mathbb{Z}^k$. Since $x\in \mathrm{KP}_{R}(\Lambda)_{n}$, we can write $x=\sum_{i=1}^m r_{i}s_{\alpha_{i}}s_{{\beta_{i}}^\ast}$ such that $ d(\alpha_{i})-d(\beta_{i})=n$, where $\alpha_{i}, \beta_{i}\in \Lambda$ and $r_i\in R\setminus\{0\}$. Set $t=\vee_{1\leq i\leq m} d(\beta_{i})$. As $\Lambda$ has no sources, for each $1\leq i\leq m$ apply $(\mathrm{KP4})$ with $t_{i}=t-d(\beta_{i})$ to have
$$s_{\alpha_{i}}s_{{\beta_{i}}^\ast} =s_{\alpha_{i}} p_{s(\alpha_{i})}s_{{\beta_{i}}^\ast} =\sum_{\lambda\in s(\alpha_{i})\Lambda^{t_{i}}} s_{(\alpha_{i}\lambda)}s_{({\beta_{i}\lambda})^{\ast}}.$$
It is clear that $d(\alpha_{i}\lambda)- d(\beta_{i}\lambda) =n$. Hence, we can write
$$x=\sum_{i=1}^{m} r_{i}s_{\alpha_{i}}s_{{\beta_{i}}^\ast}=\sum_{i=1}^{m} \sum _{\lambda\in s(\alpha_{i})\Lambda^{t_{i}}} r_{i} s_{(\alpha_{i}\lambda)}s_{({\beta_{i}\lambda})^{\ast}},$$
where the degree of each ${\beta_{i}\lambda}$ is $t$. This yields that all ${\alpha_{i}\lambda}$ also have the same degree $n-t$ and by (KP3) we have
$$s_{(\alpha_{1}\lambda)^*} x s_{{\beta_{1}\lambda}} = r_{1}p_{s({\alpha_{1}\lambda})}\in I$$
for every $\lambda\in s(\alpha_1)\Lambda^{t_1}$.
\end{proof}

Let $\Lambda$ be a row-finite locally convex $k$-graph. We define a relation on $\Lambda^{0}$ by setting $v\leq w$ if there is some $\lambda\in v\Lambda w$. A subset $H$ of $\Lambda^{0}$ is $\emph{hereditary}$ if $v\in H$ and $v\Lambda w\neq\emptyset$ imply $w\in H$. Also, $H$ is$\emph{ saturated}$ if $\{s(\lambda) : \lambda\in v\Lambda^{\leq{e_{i}}}\}\subseteq H$ for some $1\leq i\leq k$ implies $v\in H$. So, for a row-finite $k$-graph $\Lambda$ with no sources, the definition of saturation is equivalent to: $s(v\Lambda^{n})\subseteq H$ for some $n\in \Bbb N^{k}$ implies $v\in H$. The saturation of $H$, denoted by $\overline{H}$, is the smallest saturated subset of $\Lambda^{0}$ containing $H$.
Recall that for a saturated and hereditary subset $H$ of $\Lambda^{0}$,
$$I_{H}=\mathrm{span}_{R}\{s_{\mu}s_{\lambda^*}: s(\mu)=s(\lambda)\in H\}$$
is a basic and graded ideal of $\mathrm{KP}_{R}(\Lambda)$ \cite[Theorem 9.4]{clark}. Also, for an ideal $I$ of $\mathrm{KP}_{R}(\Lambda)$, we define $H_{I}:=\{v\in \Lambda^{0} : p_{v}\in I\}$. From \cite[Lemma 5.2]{aranda}, $H_{I}$ is a saturated and hereditary subset of $\Lambda^{0}$.

\begin{definition}
Let $\Lambda$ be a row-finite locally convex $k$-graph. A nonempty subset \emph{M} of $\Lambda^{0}$ is called a \emph{maximal tail} if \emph{M} satisfies in the following conditions.
\begin{enumerate}[(MT1)]
\item If $w\in\emph{M}$ and $v\in\Lambda^{0}$ with $v\Lambda w\neq\emptyset$, then $v\in\emph{M}$.
\item If $v\in\emph{M}$, for every $1\leq i\leq k$ there exists $\lambda\in v\Lambda^{\leq e_{i}}$ such that $s(\lambda)\in \emph{M}$.
\item For $v_{1},v_{2}\in \emph{M}$, there exists $w\in\emph{M}$ such that $v_{1}\Lambda w\neq\emptyset$ and $v_{2}\Lambda w\neq\emptyset$.
\end{enumerate}
\end{definition}

Note that a subset $H\subseteq \Lambda^{0}$ is hereditary and saturated if and only if $\Lambda^0\setminus H$ satisfies Conditions (MT1) and (MT2). In the following theorem, we characterize the prime Kumjian-Pask algebras when the underlying $k$-graphs have no sources. It will be generalized in Theorem \ref{thm4.5} for all locally convex row-finite $k$-graphs.

\begin{theorem}\label{thm3.3}
Let $\Lambda$ be a row-finite $k$-graph with no sources and ${R}$ be a unital commutative ring. Then the following are equivalent.
\begin{enumerate}
\item $\mathrm{KP}_{{R}}(\Lambda)$ is a prime ring.
\item ${R}$ is an ID (Integral Domain) and $\Lambda^{0}$ satisfies Condition (MT3).
\item ${R}$ is an ID and $H_{I}\cap H_{J}\neq \emptyset$ for every nonzero basic graded ideals $I,J$ of $\mathrm{KP}_{R}(\Lambda)$.
\end{enumerate}
\end{theorem}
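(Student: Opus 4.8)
The plan is to prove the three conditions equivalent via the cycle $(1)\Rightarrow(3)\Rightarrow(2)\Rightarrow(1)$, using the grading of $\mathrm{KP}_R(\Lambda)$ and Lemma \ref{vlemma1} as the main technical tool. The key structural fact I will exploit throughout is the remark recalled just before the theorem: a graded ideal $I$ of a $\mathbb{Z}^k$-graded algebra is prime precisely when $I_1I_2\subseteq I$ forces $I_1\subseteq I$ or $I_2\subseteq I$ for \emph{graded} ideals $I_1,I_2$. Since the zero ideal is graded, testing primeness of $\mathrm{KP}_R(\Lambda)$ against graded ideals suffices, and Lemma \ref{vlemma1} guarantees that every nonzero graded ideal $I$ contains some $rp_v$ with $r\neq 0$, hence meets the vertex set in the sense that $H_I=\{v:p_v\in I\}$ detects nonzeroness.

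\emph{For $(1)\Rightarrow(3)$:} First I would show $R$ is an integral domain. If $rr'=0$ with $r,r'\neq 0$ in $R$, then for any fixed vertex $v$ the principal graded ideals generated by $rp_v$ and $r'p_v$ are nonzero, yet their product lies in the zero ideal (using $p_v p_v=p_v$ and commutativity of $R$ to see $(rp_v)(r'p_v)=rr'p_v=0$, and checking the product of the generated ideals collapses); this contradicts primeness. Next, given nonzero basic graded ideals $I,J$, Lemma \ref{vlemma1} yields $rp_v\in I$ and $r'p_w\in J$, and since the ideals are \emph{basic} I can upgrade these to $p_v\in I$ and $p_w\in J$, so $H_I,H_J\neq\emptyset$. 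If $H_I\cap H_J=\emptyset$, I would argue that $IJ=0$: any product $s_\mu s_{\lambda^*}s_\eta s_{\zeta^*}$ with source in $H_I$ on the left factor and in $H_J$ on the right is forced to vanish because, by (KP3) and hereditariness of $H_I,H_J$, a nonzero product requires a common vertex downstream reachable from both sources, which would lie in $H_I\cap H_J$. That contradicts primeness, giving $H_I\cap H_J\neq\emptyset$.

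\emph{For $(3)\Rightarrow(2)$:} Given $v_1,v_2\in\Lambda^0$, I would produce the saturated hereditary closures and form the associated graded basic ideals $I_{H_1},I_{H_2}$ generated by $p_{v_1},p_{v_2}$ respectively (these are nonzero since each contains its generating projection). Condition $(3)$ then forces $H_{I_{H_1}}\cap H_{I_{H_2}}\neq\emptyset$, and unwinding the definitions of these hereditary saturated sets (every vertex in $H_{I_{H_j}}$ is reachable from $v_j$, or saturates such vertices) should deliver a common $w$ with $v_1\Lambda w\neq\emptyset$ and $v_2\Lambda w\neq\emptyset$, i.e.\ (MT3). \emph{For $(2)\Rightarrow(1)$:} assuming $R$ is an ID and (MT3) holds, I test primeness against nonzero graded ideals $I,J$. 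By Lemma \ref{vlemma1} choose $rp_v\in I$, $r'p_w\in J$; by (MT3) pick $w_0$ with paths $\mu\in v\Lambda w_0$, $\nu\in w\Lambda w_0$. Then a suitable element $s_{\mu^*}(rp_v)\cdots$ paired against $s_{\nu}(r'p_w)\cdots$ produces, via (KP2)--(KP3), a nonzero scalar multiple $rr'p_{w_0}$ lying in $IJ$; here $rr'\neq 0$ uses that $R$ is an integral domain. Since $IJ\neq 0$ for all nonzero graded $I,J$, the zero ideal is prime, so $\mathrm{KP}_R(\Lambda)$ is prime.

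The main obstacle I anticipate is the pair of product computations in $(1)\Rightarrow(3)$ (showing $IJ=0$ when $H_I\cap H_J=\emptyset$) and in $(2)\Rightarrow(1)$ (producing a nonzero element of $IJ$). Both require careful bookkeeping with the relations (KP1)--(KP4): one must reduce an arbitrary product of spanning monomials $s_\alpha s_{\beta^*}$ to a normal form and track where the sources land, using row-finiteness and the no-sources hypothesis to apply (KP4) and realign degrees (exactly the desourcifying-style alignment already used in Lemma \ref{vlemma1}). The integral-domain hypothesis is what prevents scalar cancellation from killing the witness $rr'p_{w_0}$, so the role of $R$ must be invoked precisely at that vanishing-versus-nonvanishing step rather than treated as automatic.
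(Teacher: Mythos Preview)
Your cycle $(1)\Rightarrow(3)\Rightarrow(2)\Rightarrow(1)$ reverses the paper's $(1)\Rightarrow(2)\Rightarrow(3)\Rightarrow(1)$, and your $(2)\Rightarrow(1)$ is essentially the core of the paper's $(3)\Rightarrow(1)$, done more directly. However, the other two implications in your cycle each contain a genuine gap.

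In $(1)\Rightarrow(3)$ you argue by contraposition that $H_I\cap H_J=\emptyset$ forces $IJ=0$, writing elements of $I$ as sums of monomials $s_\mu s_{\lambda^*}$ with $s(\mu)=s(\lambda)\in H_I$. But that description of $I$ is precisely the statement $I=I_{H_I}$, i.e.\ the structure theorem for basic graded ideals (\cite[Theorem~5.5]{aranda}); without invoking it you have no reason to believe every element of $I$ has that form. A much shorter route avoids this entirely: since $\mathrm{KP}_R(\Lambda)$ is prime and $I,J$ are nonzero, $I\cap J\supseteq IJ\neq 0$; the intersection is again graded and basic, so Lemma~\ref{vlemma1} plus basicness yields $p_u\in I\cap J$, i.e.\ $u\in H_I\cap H_J$. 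The paper instead proves $(1)\Rightarrow(2)$ directly by observing that primeness makes the corner $p_w\mathrm{KP}_R(\Lambda)p_v$ nonzero, which immediately produces a common descendant---cleaner still.

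In $(3)\Rightarrow(2)$ you take $z_0\in H_{I_{H_1}}\cap H_{I_{H_2}}=\overline{H_1}\cap\overline{H_2}$ and claim that ``unwinding the definitions'' gives $w$ with $v_1\Lambda w\neq\emptyset$ and $v_2\Lambda w\neq\emptyset$. This is not automatic: vertices added by saturation need not be reachable from $v_j$. One must argue further: from $z_0\in\overline{H_1}$ follow edges forward (no sources!) to reach some $w_1\in H_1$; since $\overline{H_2}$ is hereditary, $w_1\in\overline{H_2}$; repeat to reach $w_2\in H_2$ from $w_1$; hereditariness of $H_1$ then gives $w_2\in H_1\cap H_2$. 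This back-and-forth step is missing from your sketch. The paper sidesteps the issue entirely by proving $(2)\Rightarrow(3)$ instead (Lemma~\ref{vlemma1} plus (MT3) gives a common reachable vertex immediately) and only using saturated closures in $(3)\Rightarrow(1)$, where membership in $\overline{H_i}$---not reachability from $v_i$---is exactly what is needed to place $r_1r_2p_{z_0}$ in $IJ$.
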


\begin{proof}
{\bf 1 $\Rightarrow$ 2:} Suppose that $\mathrm{KP}_{R}(\Lambda)$ is a prime ring and $v,w\in \Lambda^{0}$. Then $\mathrm{KP}_{R}(\Lambda)p_{v}\mathrm{KP}_{R}(\Lambda)$ and $\mathrm{KP}_{R}(\Lambda)p_{w}\mathrm{KP}_{R}(\Lambda)$ are two nonzero ideals. By the primeness of $\mathrm{KP}_{R}(\Lambda)$, $\mathrm{KP}_{R}(\Lambda)p_{w}\mathrm{KP}_{R}(\Lambda)p_{v}\mathrm{KP}_{R}(\Lambda)$ is also nonzero and so is the corner $p_{w}\mathrm{KP}_{R}(\Lambda)p_{v}$. By relation (\ref{equ2.1}), there are $\alpha,\beta\in \Lambda$ such that $p_{w}s_{\alpha}s_{\beta^{\ast}}p_{v}\neq 0$ and $s(\alpha)=s(\beta)=z$. Hence $\alpha\in w\Lambda z$ and $\beta\in v\Lambda z$ which means $\Lambda^{0}$ satisfies Condition (MT3).
Now we show that $R$ is an ID. By contradiction, if there exist nonzero elements $r_{1},r_{2}\in R$ such that $r_{1}r_{2}=0$, then $r_{1}\mathrm{KP}_{R}(\Lambda)$ and $r_{2}\mathrm{KP}_{R}(\Lambda)$ are two nonzero ideals of $\mathrm{KP}_{R}(\Lambda)$. But
$r_{1}\mathrm{KP_{R}(\Lambda)} r_{2}\mathrm{KP}_{R}(\Lambda)= r_{1}r_{2}\mathrm{KP}_{R}(\Lambda)=\{0\}$, which contradicts the primeness of $\mathrm{KP}_{R}(\Lambda)$.

{\bf 2 $\Rightarrow$ 3:} Let $I$ and $J$ be two nonzero basic graded ideals of $\mathrm{KP}_{R}(\Lambda)$. By Lemma \ref{vlemma1}, there are $v,w\in \Lambda^{0}$ such that $p_{v} \in I$ and $p_{w}\in J$. Condition (MT3) implies that there is $z\in \Lambda^{0}$ such that $v\Lambda z \neq \emptyset$ and $w\Lambda z\neq\emptyset$ and so $p_{z}\in I\cap J$ and $H_{I}\cap H_{J}\neq \emptyset$.

{\bf 3 $\Rightarrow$ 1:} Let $\emph{R}$ be an ID and $H_{I}\cap H_{J} \neq\emptyset$ for every nonzero basic graded ideals $I,J$ of $\mathrm{KP}_{R}(\Lambda)$. We show that the zero ideal of $\mathrm{KP}_{R}(\Lambda)$ is prime. Since the zero ideal is graded, it is sufficient to show that $IJ\neq \{0\}$ for every nonzero graded ideals $I,J$ of $\mathrm{KP}_{R}(\Lambda)$. If $I$ and $J$ are such ideals, by Lemma \ref{vlemma1} there are $v_{1},v_{2}\in \Lambda^{0}$ and $r_{1},r_{2}\in R\setminus\{0\}$ such that $r_{1}p_{v_{1}}\in I$ and $r_{2}p_{v_{2}}\in J$. If we set $H_{i}=\{z\in \Lambda^{0}: v_{i}\Lambda z\neq\emptyset\}$ for $i\in \{1,2\}$, it is clear that $H_{1}$ and $H_{2}$ are two hereditary subsets of $\Lambda^{0}$. Let $I'=I_{\overline{H_{1}}}$ and $J'=I_{\overline{H_{2}}}$. Then $p_{v_{1}}\in I^{\prime}$, $p_{v_{2}}\in J^{\prime}$ and we have $r_{1}I^{\prime}\subseteq I$ and $r_{2}J^{\prime}\subseteq J$. Since $I^{\prime},J^{\prime}$ are basic graded ideals of $\mathrm{KP}_{R}(\Lambda)$, from statement $(3)$ we get $\overline{H_{1}}\cap\overline{H_{2}}\neq \emptyset$. Thus, there is $z_{0}\in \overline{H_{1}}\cap\overline{H_{2}}$ and since \emph{R} is an ID, we have $r_{1}r_{2}\neq 0$, $r_{1}r_{2}p_{z_{0}}\in IJ$, and so $IJ\neq \{0\}$.
\end{proof}

Now we consider primitive Kumjian-Pask algebras. Recall that a ring $R$ is said \emph{left primitive} (\emph{right primitive}) if it has a faithful simple left (right) \emph{R}-module. Since a Kumjian-Pask algebra is left primitive if and only if it is right primitive, in this case, we simply say to be \emph{primitive}. Note that every primitive ring is prime. Also, every commutative primitive ring is a field.

\begin{lemma}[{\cite[Lemmas 2.1 and 2.2]{lan}}]\label{lemma3}
Let $R$ be a field and $R_{1}$ be a prime $R$-algebra. Then there exists a prime unital $R$-algebra $R_{2}$ which $R_{1}$ embeds in $R_{2}$ as an ideal. Furthermore, $R_{2}$ is primitive if and only if $R_{1}$ is primitive.
\end{lemma}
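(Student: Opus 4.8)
The plan is to realize $R_2$ as a \emph{unitization} of $R_1$ and then transport primeness and primitivity across the embedding, the whole point being that $R_1$ should sit inside $R_2$ as a two-sided ideal with trivial annihilator. First I would split on whether $R_1$ already has an identity. If it does, I simply take $R_2 = R_1$; then $R_2$ is a prime unital $R$-algebra containing $R_1 = R_2$ as an ideal, and both clauses are immediate. If $R_1$ is non-unital, I would form the Dorroh extension $R_2 := R_1 \oplus R$ (as an $R$-module) with product $(a,\lambda)(b,\mu) = (ab + \lambda b + \mu a, \lambda\mu)$ and identity $(0,1)$, and embed $R_1 \hookrightarrow R_2$ via $a \mapsto (a,0)$; a direct check shows the image is a two-sided ideal. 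The case split is essential: the unitization of a \emph{unital} ring splits as a direct product and is never prime, so the Dorroh extension cannot be used uniformly.

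The crucial technical step, from which everything else follows, is that $\operatorname{ann}_{R_2}(R_1) = \{x \in R_2 : xR_1 = R_1 x = 0\}$ is zero in the non-unital case. To see this I would take $(a,\lambda)$ annihilating $R_1$ and argue by cases. When $\lambda = 0$ we get $aR_1 = R_1 a = 0$; in particular $aR_1 a = 0$, so primeness of $R_1$ forces $a = 0$. When $\lambda \neq 0$, the conditions read $ab = -\lambda b$ and $ba = -\lambda b$ for all $b \in R_1$, so—using that $R$ is a \emph{field}, hence $\lambda$ is invertible—the element $-\lambda^{-1}a$ is a two-sided identity for $R_1$, contradicting non-unitality. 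Thus $\operatorname{ann}_{R_2}(R_1) = 0$.

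Primeness of $R_2$ is then a short ideal-intersection argument. Given ideals $I, J$ of $R_2$ with $IJ = 0$, the sets $I \cap R_1$ and $J \cap R_1$ are ideals of $R_1$ with $(I\cap R_1)(J\cap R_1) = 0$, so primeness of $R_1$ gives, say, $I \cap R_1 = 0$. Since $R_1$ is an ideal, $IR_1 \subseteq I \cap R_1 = 0$ and likewise $R_1 I = 0$, whence $I \subseteq \operatorname{ann}_{R_2}(R_1) = 0$; hence $R_2$ is prime.

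Finally, for the primitivity equivalence I would move a faithful simple module across the ideal inclusion in each direction. For ``$R_1$ primitive $\Rightarrow R_2$ primitive'', take a faithful simple left $R_1$-module $M$; as $R_1$ is an $R$-algebra, $M$ is naturally an $R$-vector space, so $(a,\lambda)\cdot m := am + \lambda m$ makes $M$ an $R_2$-module. Every $R_2$-submodule is in particular an $R_1$-submodule, so simplicity over $R_1$ forces simplicity over $R_2$; and faithfulness follows because $\operatorname{ann}_{R_2}(M)$ is an ideal meeting $R_1$ trivially, hence contained in $\operatorname{ann}_{R_2}(R_1) = 0$. For the converse, take a faithful simple left $R_2$-module $M$; using that $R_1$ is an ideal, the set $\{x \in M : R_1 x = 0\}$ is an $R_2$-submodule, and it must be $0$ (otherwise it equals $M$, forcing $R_1 \subseteq \operatorname{ann}_{R_2}(M) = 0$), so $R_1 m \neq 0$ for every $0 \neq m$. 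But $R_1 m$ is again an $R_2$-submodule (since $R_2 R_1 \subseteq R_1$), so $R_1 m = M$, showing $M$ is simple and faithful over $R_1$. I expect this last step—keeping the two module structures and their faithfulness straight—to be the main obstacle, together with the realization that the unital case must be handled separately because the Dorroh construction fails to be prime there.
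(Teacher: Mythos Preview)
The paper does not prove this lemma; it is quoted without proof from \cite[Lemmas~2.1 and~2.2]{lan} and used as a black box in the proof of Theorem~\ref{thm3.8}. Your Dorroh-unitization argument is a correct and self-contained proof of the stated version (over a field~$R$), including the observation that the unital case must be handled separately because the Dorroh extension of a unital ring splits as a direct product.

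One step deserves an extra line of justification: the claim that a faithful simple left $R_1$-module $M$ is ``naturally an $R$-vector space.'' Primitivity is a ring-theoretic notion, so \emph{a priori} $M$ is only an abelian group with an $R_1$-action, and the formula $(a,\lambda)\cdot m = am + \lambda m$ presupposes an $R$-action on $M$ compatible with the $R_1$-action. The fix is short: choose $m_0$ with $R_1 m_0 = M$, set $L=\{a\in R_1: a m_0=0\}$, and note that for $a\in L$ and $\lambda\in R$ the element $(\lambda a)m_0$ is annihilated by every $b\in R_1$ (since $b(\lambda a)m_0=(\lambda b)(a m_0)=0$), hence lies in the submodule $\{m:R_1 m=0\}=0$. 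Thus $L$ is an $R$-submodule, $M\cong R_1/L$ carries the quotient $R$-structure, and the compatibility $(\lambda a)m=\lambda(am)$ needed for the $R_2$-action follows. With this recorded, the rest of your module-transport argument is correct as written.
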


\begin{lemma}[{\cite [Theorem 1]{forman}}]\label{lemma4}
A unital ring $R$ is left primitive if and only if there is a left ideal $M\neq R$ of $R$ such that for every nonzero two sided ideal $I$ of $R$, we have $M+I=R$.
\end{lemma}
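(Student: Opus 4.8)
The plan is to prove both implications directly from the module-theoretic definition of left primitivity, using throughout that $R$ is unital so that simple left modules are cyclic and maximal left ideals exist. No deep machinery should be needed; the argument is a careful translation between a faithful simple module and the ``$M+I=R$'' condition.

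For the forward direction, suppose $R$ is left primitive with faithful simple left module $V$. I would pick any nonzero $x\in V$, so that $V=Rx$ by simplicity, and set $M:=\{r\in R : rx=0\}$, a proper left ideal with $R/M\cong V$. Given a nonzero two-sided ideal $I$, faithfulness forces $IV\neq 0$, since otherwise $I\subseteq\mathrm{ann}(V)=0$; as $IV$ is a submodule of the simple module $V$, this yields $IV=V$. Because $I$ is two-sided and $R$ is unital we have $IR=I$, hence $Ix=(IR)x=I(Rx)=IV=V=Rx$, so some $a\in I$ satisfies $ax=x$. Then $a-1\in M$ and $1=a-(a-1)\in I+M$; since $M$ is a left ideal and $I+M$ is a left ideal containing the unit, $M+I=R$.

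For the converse, assume there is a proper left ideal $M$ with $M+I=R$ for every nonzero two-sided ideal $I$. First I would enlarge $M$ to a maximal left ideal: by unitality and Zorn's lemma $M$ sits inside some maximal left ideal $M'$, and the hypothesis is inherited because $M'+I\supseteq M+I=R$. Then $V:=R/M'$ is a simple left module, and it remains to verify faithfulness. The annihilator $J:=\mathrm{ann}(R/M')=\{r\in R : rR\subseteq M'\}$ is a two-sided ideal contained in $M'$. If $J\neq 0$, the hypothesis gives $M'+J=R$, contradicting $J\subseteq M'\neq R$; hence $J=0$, so $V$ is a faithful simple left module and $R$ is left primitive.

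I expect the only delicate points to be bookkeeping between one-sided and two-sided ideals: specifically, confirming that $\mathrm{ann}(R/M')$ is genuinely two-sided and lies inside $M'$, and that passing to a maximal left ideal preserves the defining property. Both are routine once the unital hypothesis is invoked to guarantee cyclicity of $V$ and the existence of maximal left ideals, so I do not anticipate a substantive obstacle.
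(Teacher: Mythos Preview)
The paper does not supply its own proof of this lemma; it is simply quoted from Formanek's paper \cite{forman} and used as a black box in the proof of Theorem~\ref{thm3.8}. So there is no argument in the paper to compare against.

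That said, your proof is correct and is essentially the standard argument for this characterization. Both directions are handled cleanly: in the forward direction the key computation $Ix=(IR)x=I(Rx)=IV=V$ is exactly where the two-sidedness of $I$ enters, and you invoke it properly; in the converse, enlarging $M$ to a maximal left ideal and noting that $\mathrm{ann}(R/M')$ is a two-sided ideal contained in $M'$ (via $r\cdot 1\in M'$) is the right move. The points you flagged as ``delicate'' are indeed routine under the unital hypothesis, so the argument goes through without obstacle.
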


We use the infinite-path representation of a $k$-graph defined in \cite[Section 3]{aranda} to prove Theorem \ref{thm3.8} below.

\begin{definition}[\cite{aranda}]
Let $\Lambda$ be a row-finite $k$-graph with no sources and $\emph{R}$ be a unital commutative ring. For $v\in \Lambda^{0}$ and $\mu,\nu\in \Lambda^{\neq 0}$, we define the maps $f_{v} , f_{\lambda} , f_{\mu^{\ast}} : \Lambda^{\infty}\rightarrow \Bbb F_{R}(\Lambda^{\infty})$ by
\begin{align*}
f_{v}(x) &= \left\{
\begin{array}{ll}
\ x &\mathrm{ if}~~~ x(0)=v \\
\ 0  & \mathrm{otherwise},\\
\end{array}
\right.\\
f_{\lambda}(x) &= \left\{
\begin{array}{ll}
\ \lambda x &\mathrm{ if}~~~x(0)=s(\lambda) \\
\ 0  & \mathrm{otherwise , ~ and}\\
\end{array}
\right.\\
f_{\mu^{\ast}}(x) &= \left\{
\begin{array}{ll}
\ x(d(\mu),\infty)  &\mathrm{ if}~~~ x(0,d(\mu))=\mu \\
\ 0   & \mathrm{otherwise,}\\
\end{array}
\right.
\end{align*}
where $\Bbb F_{R}(\Lambda^{\infty})$ is the free module with basis the infinite path space. By the universal property of free modules, there are nonzero endomorphisms $Q_{v} , T_{\lambda} , T_{\mu^{\ast}} : \Bbb F_{R}(\Lambda^{\infty})\rightarrow \Bbb F_{R}(\Lambda^{\infty})$ extending $f_{v} , f_{\lambda} , f_{\mu^{\ast}}$. In \cite{aranda}, it is shown that $(Q,T)$ is a Kumjian-Pask $\Lambda$-family in $\mathrm{End}(\Bbb F_{R}(\Lambda^{\infty}))$. So there is an $R$-algebra homomorphism $\pi_{Q,T} : \mathrm{KP}_{R}(\Lambda) \rightarrow \mathrm{End}(\Bbb F_{R}(\Lambda^{\infty}))$ such that $\pi_{Q,T}(p_{v})=Q_{v}$, $\pi_{Q,T}(s_{\lambda})=T_{\lambda}$, and $\pi_{Q,T}(s_{\mu^{\ast}})=T_{\mu^{\ast}}$. The homomorphism $\pi_{Q,T}$ is called the \emph{infinite-path representation of $\mathrm{KP}_{R}(\Lambda)$}.
\end{definition}

Recall from Corollary 4.10 and Lemma 5.9 of \cite{aranda} that the infinite-path representation $\pi_{Q,T}$ of $\mathrm{KP}_R(\Lambda)$ is faithful if and only if $\Lambda$ is aperiodic.

\begin{theorem}\label{thm3.8}
Let $\Lambda$ be a row-finite $k$-graph with no sources and $R$ be a unital commutative ring. Then $\mathrm{KP}_{R}(\Lambda)$ is primitive if and only if
\begin{enumerate}
\item $\Lambda^{0}$ satisfies Condition (MT3),
\item $\Lambda$ is aperiodic, and
\item $R$ is a field.
\end{enumerate}
\end{theorem}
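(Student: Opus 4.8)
The plan is to prove both directions by separating the primeness content from the faithfulness content, using the characterizations already established. First observe that every primitive ring is prime, and that by Theorem \ref{thm3.3} a Kumjian-Pask algebra $\mathrm{KP}_R(\Lambda)$ (over a $k$-graph with no sources) is prime precisely when $R$ is an ID and $\Lambda^0$ satisfies (MT3). So both implications of the present theorem can take conditions (1) of Theorem \ref{thm3.3}'s type — namely (MT3) and the ring-theoretic hypothesis — essentially for free, and the real work is isolating the role of aperiodicity (condition (2)) and upgrading "$R$ is an ID" to "$R$ is a field" (condition (3)).

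For the forward direction, I would assume $\mathrm{KP}_R(\Lambda)$ is primitive. Primitivity implies primeness, so Theorem \ref{thm3.3} immediately gives (MT3) and that $R$ is an ID. To get that $R$ is a field, I would use that $R$ sits inside $\mathrm{KP}_R(\Lambda)$ as (scalar multiples of) the center-like elements $r\mapsto r\sum p_v$; more carefully, I would exhibit $R$ as isomorphic to a suitable corner or to the commutant acting on a faithful simple module, and invoke the stated fact that a commutative primitive ring is a field — realizing $R$ (or a copy of it) as a commutative primitive ring forces it to be a field. For aperiodicity, I would argue contrapositively: if $\Lambda$ is periodic, the infinite-path representation $\pi_{Q,T}$ is not faithful (by the cited Corollary 4.10 and Lemma 5.9 of \cite{aranda}), and I would show that periodicity produces a nonzero element in $\ker\pi_{Q,T}$ that obstructs the existence of a faithful simple module, contradicting primitivity.

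For the converse, assume (1), (2), (3). Since a field is an ID and (MT3) holds, Theorem \ref{thm3.3} gives that $\mathrm{KP}_R(\Lambda)$ is prime. The plan is then to produce a faithful simple module directly from the infinite-path representation. By the cited result, aperiodicity makes $\pi_{Q,T}\colon \mathrm{KP}_R(\Lambda)\to \mathrm{End}(\mathbb{F}_R(\Lambda^\infty))$ faithful, so $\mathbb{F}_R(\Lambda^\infty)$ is a faithful module; I would then cut it down to a simple submodule generated by a single infinite path $x$, using (MT3) to show that the cyclic submodule $\mathrm{KP}_R(\Lambda)\cdot x$ is both faithful and simple (MT3 guarantees that from any two basis paths one can route down to a common vertex, which drives the simplicity/irreducibility argument). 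Alternatively, I would invoke Lemma \ref{lemma4}: exhibit a proper left ideal $M$ (the annihilator of a fixed boundary path's tail) such that $M+I=\mathrm{KP}_R(\Lambda)$ for every nonzero two-sided ideal $I$, where primeness plus (MT3) ensures $I$ contains some $p_z$ reachable from the relevant vertices. The main obstacle I expect is the converse's simplicity/faithfulness step: carefully combining aperiodicity (to kill the kernel) with (MT3) (to force irreducibility of the cyclic module) while keeping track of the coefficient field $R$, and ensuring the chosen module is genuinely simple rather than merely cyclic — this is where the delicate routing of paths via (MT3) and the no-sources hypothesis must be made precise.
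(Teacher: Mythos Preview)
Your overall architecture is right---reduce to Theorem~\ref{thm3.3} for the primeness content, then isolate aperiodicity and the field hypothesis---but there are two genuine gaps and one misattribution that would prevent the argument from going through as written.

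\medskip

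\textbf{Converse direction (conditions imply primitivity).} Your Plan~B is essentially the paper's route, but you omit a crucial step: Lemma~\ref{lemma4} is stated for \emph{unital} rings, and $\mathrm{KP}_R(\Lambda)$ is not unital when $\Lambda^0$ is infinite. The paper first invokes Lemma~\ref{lemma3} to embed $\mathrm{KP}_R(\Lambda)$ as an ideal in a prime unital $R$-algebra $R_2$ whose primitivity is equivalent, and only then builds the left ideal $M=\sum_i R_2(1-s_{\lambda_i}s_{\lambda_i^*})$ inside $R_2$ and applies Lemma~\ref{lemma4}. Without this unitization your left-ideal criterion has no home. Separately, you write that ``primeness plus (MT3) ensures $I$ contains some $p_z$''; this is not what does the work. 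The paper uses aperiodicity and the Cuntz--Krieger uniqueness theorem (via \cite[Proposition~5.11]{aranda}) to conclude that every nonzero ideal of $\mathrm{KP}_R(\Lambda)$ contains a vertex idempotent. Primeness alone does not give that---indeed a prime Kumjian--Pask algebra over a periodic $k$-graph has nonzero ideals containing no $p_v$. Your Plan~A (cutting $\mathbb{F}_R(\Lambda^\infty)$ down to a cyclic submodule $\mathrm{KP}_R(\Lambda)\cdot x$) is a genuinely different strategy from the paper's and can be made to work, but it is not automatic: an arbitrary $x\in\Lambda^\infty$ need not generate a simple or faithful submodule, so you must construct $x$ carefully (essentially the same (MT3)-driven nesting of paths $\lambda_1,\lambda_2,\ldots$ that the paper uses to build $M$) and then argue both properties; you correctly flag this as the hard step.

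\medskip

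\textbf{Forward direction, $R$ a field.} Your plan to ``realize $R$ as a commutative primitive ring'' does not work as stated: $R$ is not the ring that is primitive, $\mathrm{KP}_R(\Lambda)$ is, and there is no direct transfer of primitivity down to the coefficient ring. The paper instead argues with the faithful simple module $M$: if $I\lhd R$ is a nonzero proper ideal, then $I\mathrm{KP}_R(\Lambda)$ is nonzero and proper in $\mathrm{KP}_R(\Lambda)$ by \cite[Theorem~6.4(a)]{aranda}, and this is used to produce a nonzero proper submodule of $M$, contradicting simplicity. Your aperiodicity argument in this direction is in the right spirit (use $\ker\pi_{Q,T}\neq 0$ when $\Lambda$ is periodic), and matches the paper's line: the point is that $\ker\pi_{Q,T}$ is a nonzero ideal containing no $p_v$, which is then played off against the faithful simple module.
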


\begin{proof}
First assume that the above three conditions hold. By Theorem \ref{thm3.3}, $\mathrm{KP}_{R}(\Lambda)$ is a prime ring. Then Lemma \ref{lemma3} implies that there exists a prime unital ring $R_{2}$ such that $\mathrm{KP}_{R}(\Lambda)$ embeds in $R_{2}$ as an ideal and primitivity of them are equivalent. So, it suffices to prove $R_2$ is a primitive ring. Taking an arbitrary vertex $v\in \Lambda^{0}$, suppose that $H=\{w\in \Lambda^{0}: v\leq w\}$ and write $H=\{v_{1},v_{2},\ldots\}$. We claim that there exists a sequence $\{\lambda_{i}\}_{i=1}^{\infty}$ of paths in $\Lambda$ such that for every $i\in \Bbb N$, $\lambda_{i+1}=\lambda_{i}\mu_{i}$ for some path $\mu_{i}\in \Lambda$ and also $v_{i}\leq s(\lambda_{i})$. For this, set $\lambda_{1}=v_{1}$. Clearly, $\lambda_{1}$ satisfies the properties. Assume that there are $\lambda_{1},\ldots, \lambda_{n}$ with the indicated properties. By Condition (MT3), there is $u\in \Lambda^{0}$ such that $v_{n+1}\leq u$ and $s(\lambda_{n})\leq u$ and so there is $\mu\in \Lambda$ with $s(\mu)=u$, $r(\mu)=s(\lambda_{n})$. If we set $\lambda_{n+1}:=\lambda_{n}\mu$, then $\lambda_{n+1}$ has the desired properties. Hence, the claim holds. Note that since each $\lambda_{i}$ is a subpath of $\lambda_{i+1}$, for every $n> i$, we have
$$s_{\lambda_{i}}s_{{\lambda_{i}}^{\ast}}s_{\lambda_{n}}s_{{\lambda_{n}}^{\ast}}= s_{\lambda_{n}}s_{{\lambda_{n}}^{\ast}}.$$

Define $M= \sum_{i=1}^{\infty} R_{2}(1-s_{\lambda_{i}}s_{{\lambda_{i}}^{\ast}})$ that is a left ideal of $R_{2}$. We see that $M \neq R_{2}$. Indeed, if $M=R_{2}$, we have $1\in M$. So, there are $r_{1},\ldots,r_{m}\in R_{2}$ such that $1=\sum_{i=1}^{m} r_{i}(1-s_{\lambda_{i}}s_{{\lambda_{i}}^{\ast}})$ which follows
$$s_{\lambda_{m}}s_{{\lambda_{m}}^{\ast}}=\sum_{i=1}^{m} r_{i}(1-s_{\lambda_{i}}s_{{\lambda_{i}}^{\ast}})s_{\lambda_{m}}s_{{\lambda_{m}}^{\ast}}=0,$$
a contradiction. Therefore, $1\notin M$ and $M\neq R_{2}$.

Now suppose that $I$ is an arbitrary two sided ideal of $R_{2}$. We show that $M+I=R_{2}$. Since $R_{2}$ is prime and $\mathrm{KP}_{R}(\Lambda)$ is a two sided ideal of $R_{2}$, we have $I_{1}=I\cap \mathrm{KP}_{R}(\Lambda)$ is a nonzero two sided ideal of $\mathrm{KP}_{R}(\Lambda)$. Since $\Lambda$ is aperiodic, an application of the Cuntz-Krieger uniqueness theorem implies that $I_1$ contains a vertex idempotent $p_w$ (see \cite[Proposition 5.11]{aranda}). By Condition (MT3), there exists $z\in \Lambda^{0}$ such that $w\leq z$ and $v\leq z$. So, $z=v_{n}$ for some $n\geq 1$. This yields that $p_{s(\lambda_{n})}\in I$ and $s_{\lambda_{n}}s_{{\lambda_{n}}^{\ast}}\in I$. As $1=(1-s_{\lambda_{n}}s_{{\lambda_{n}}^{\ast}})+s_{\lambda_{n}}s_{{\lambda_{n}}^{\ast}}$, we get $1\in M+I$ and $M+I=R_{2}$. Therefore, the left ideal $M$ satisfies the conditions of Lemma \ref{lemma4} and so $R_{2}$ is primitive. By Lemma \ref{lemma3}, we conclude that $\mathrm{KP}_{R}(\Lambda)$ is primitive.

Conversely, suppose that $\mathrm{KP}_{R}(\Lambda)$ is  primitive. Therefore, $\mathrm{KP}_{R}(\Lambda)$ is a prime ring and $\Lambda^{0}$ satisfies Condition (MT3) by Theorem \ref{thm3.3}. We show that $\Lambda$ is aperiodic. Since $\mathrm{KP}_{R}(\Lambda)$ is a primitive ring, it has a faithful simple left $\mathrm{KP}_{R}(\Lambda)$-module $M$. Since $M$ is simple, there is a maximal left ideal $J$ of $\mathrm{KP}_{R}(\Lambda)$ such that $M\cong {\mathrm{KP}_{R}(\Lambda)}/{J}$ as modules. It follows $aM=0$ for every $a\in J$. As $M$ is faithful, we get $a=0$ whenever $a\in J$. So $J=0$, $M\cong \mathrm{KP}_{R}(\Lambda)$, and $\mathrm{KP}_{R}(\Lambda)$ is a simple $\mathrm{KP}_{R}(\Lambda)$-module. If $\Lambda$ is periodic, \cite[Lemma 5.9]{aranda} implies that the kernel of infinite-path representation $\pi_{Q,T}$ is a nonzero ideal of $\mathrm{KP}_{R}(\Lambda)$. Also, $\pi_{Q,T}$ contains no vertex idempotents \cite[Proposition 5.11]{aranda} and so, $\ker \pi_{Q,T}$ is a proper nonzero (two sided) ideal of $\mathrm{KP}_{R}(\Lambda)$. This contradicts the simplicity of $\mathrm{KP}_{R}(\Lambda)$, and hence, $\Lambda$ is aperiodic.

It remains to show $R$ is a field. If $M$ is a simple and faithful left $\mathrm{KP}_{R}(\Lambda)$-module, similar to above we have $M\cong \mathrm{KP}_{R}(\Lambda)$ as modules. If $R$ is not a field, there is a nonzero proper ideal $I$ of $R$. Then using \cite[Theorem 6.4(a)]{aranda}, $IM\cong I\mathrm{KP}_{R}(\Lambda)$ is a nonzero proper left $\mathrm{KP}_{R}(\Lambda)$-submodule of $M$. This contradicts the simplicity of $M$, and therefore, $R$ must be a field.
\end{proof}


\section{Locally convex $k$-graphs}

Let $\Lambda$ be a row-finite locally convex $k$-graph. In \cite[Theorem 7.4]{clark}, it is shown that there is a row-finite $k$-graph $\tilde{\Lambda}$ with no sources such that $\mathrm{KP}_{R}(\Lambda)$ and $\mathrm{KP}_{R}(\tilde{\Lambda})$ are Morita equivalent. In this section, we first review the construction of $\tilde{\Lambda}$ due to Farthing \cite{Far} and then generalize Theorems \ref{thm3.3} and \ref{thm3.8} for row-finite locally convex $k$-graphs with possible sources.

For a row-finite locally convex $k$-graph $\Lambda$, define the sets $V_{\Lambda}$ and $P_{\Lambda}$ as
\begin{align*}
V_{\Lambda}&:=\{ (x;m) : x\in \Lambda^{\leq \infty} , m\in \Bbb N^{k}\},\\
P_{\Lambda}&:=\{ (x;(m,n)) : x\in \Lambda^{\leq \infty} , m\leq n\in \Bbb N^{k}\}.
\end{align*}
If we define $(x;m)\approx (y;n)$ if and only if
\begin{enumerate}
\item[V1)] $x(m\wedge d(x))=y(n\wedge d(y))$ and
\item[V2)] $m-m\wedge d(x)=n-n\wedge d(y)$,
\end{enumerate}
then $\approx$ is an equivalence relation on $V_{\Lambda}$ and we denote the class of $(x;m)$ by $[x;m]$. Also, the relation $(x;(m,n))\thicksim (y;(p,q))$ if and only if
\begin{enumerate}
\item[P1)] $ x(m\wedge d(x), n\wedge d(x))= y(p\wedge d(y), q\wedge d(y))$,
\item[P2)] $ m-m\wedge d(x)=p-p\wedge d(y)$, and
\item[P3)] $ n-m=q-p$
\end{enumerate}
is an equivalence relation on $P_{\Lambda}$ and we denote the equivalence class of $(x;(m,n))$ by $[x;(m,n)]$.

Following \cite{Far}, there is a $k$-graph $\tilde{\Lambda}$ with $\tilde{\Lambda}^{0}:= V_{\Lambda}/ \approx$ and $\tilde{\Lambda}:= P_{\Lambda}/\sim$ such that
\begin{align*}
&r([x;(m,n)])=[x;m] \mathrm{~and~} s([x;(m,n)])=[x;n],\\
&[x;(m,n)] \circ [y;(p,q)]=[x(0,n\wedge d(x))\sigma^{p\wedge d(y)}; (m,n+q-p)],\\
&d([x;m])=0 \mathrm{~and~} d([x;(m,n)])=n-m.
\end{align*}
The $k$-graph $\tilde{\Lambda}=(\tilde{\Lambda}^{0}, \tilde{\Lambda}, r, s, d)$ contains no sources which is called the \emph{desourcification of $\Lambda$}. Recall from \cite[Corollary 7.5]{clark} that there is a surjective Morita context between $\mathrm{KP}_{R}(\tilde{\Lambda})$ and $\mathrm{KP}_{R}(\Lambda)$. Note that, by \cite[Theorem 7.4]{clark}, the primeness and primitivity are invariant under Morita contexts.

\begin{remark}
We may simply check that the paths $(\lambda x,(0,d(\lambda)))$ and $(\lambda y,(0,d(\lambda)))$ are equivalent under $\sim$ for any $\lambda\in \Lambda$ and $x,y\in s(\lambda)\Lambda^{\leq\infty}$. So, the map $\iota :\Lambda\rightarrow\tilde{\Lambda}$ satisfying $\iota(\lambda)=[\lambda x;(0,d(\lambda))]$ for any $x\in s(\lambda)\Lambda^{\leq\infty}$ is a well-defined map. Indeed, $\iota$ is an injective $k$-graph morphism and $\Lambda$ and $\iota(\Lambda)$ are isomorphic. Moreover, the map $\pi :\tilde{\Lambda}\rightarrow\iota(\Lambda)$ defined by $\pi[x;(m,n)]=[x;(m\wedge d(x),n\wedge d(x))]$ for $x\in \Lambda^{\leq\infty}$ and $m\leq n\in \Bbb N^{k}$ is a well-defined surjective $k$-graph morphism such that $\pi \circ\pi=\pi$ and $\pi \circ \iota=\iota$. See \cite[Section 6]{clark} for more details.
\end{remark}

\begin{lemma}[{\cite[Lemma 2.2]{Robert}}]\label{lemma7}
Let $\Lambda$ be a row-finite locally convex $k$-graph and $v\in \Lambda^{0}$. Suppose $x\in v\Lambda^{\leq\infty}$ and $p\in \Bbb N^{k}$ satisfying $p\wedge d(x)=0$. Then for any other $z\in v\Lambda^{\leq\infty}$, we have $p\wedge d(z)=0$ and $[x;(0,p)]=[z;(0,p)]$.
\end{lemma}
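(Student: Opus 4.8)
The plan is to prove the two assertions in turn, the first carrying all the content and the second being a bookkeeping verification. The key observation is that the hypothesis $p\wedge d(x)=0$, read coordinatewise, says that for each $i\in\{1,\dots,k\}$ either $p_{i}=0$ or $d(x)_{i}=0$; hence the only interesting coordinates are those $i$ with $p_{i}>0$, where necessarily $d(x)_{i}=0$. My strategy is to convert this degree-zero condition on $x$ into a property of the common range vertex $v=x(0)=z(0)$, and then transfer that property onto $z$.

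First I would fix a coordinate $i$ with $p_{i}>0$, so that $d(x)_{i}=0$. Applying the boundary-path condition for $x$ at the object $0$, which satisfies $0\leq d(x)$ and has $i$th coordinate $0=d(x)_{i}$, yields $x(0)\Lambda^{e_{i}}=v\Lambda^{e_{i}}=\emptyset$; thus $v$ emits no edge of colour $i$. Now let $z\in v\Lambda^{\leq\infty}$ be arbitrary. If we had $d(z)_{i}\geq 1$, then $e_{i}\leq d(z)$ and the morphism $z(0,e_{i})$ would be defined; since $z$ is degree preserving it has degree $e_{i}$ and range $z(0)=v$, so $z(0,e_{i})\in v\Lambda^{e_{i}}$, contradicting $v\Lambda^{e_{i}}=\emptyset$. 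Hence $d(z)_{i}=0$, and the $i$th coordinate of $p\wedge d(z)$ vanishes. For the remaining coordinates $p_{i}=0$ already forces the $i$th coordinate of $p\wedge d(z)$ to vanish, so altogether $p\wedge d(z)=0$.

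It then remains to verify $(x;(0,p))\sim(z;(0,p))$ against the three defining conditions (P1)--(P3). Conditions (P2) and (P3) are immediate, since $0-0\wedge d(x)=0=0-0\wedge d(z)$ and $p-0=p-0$. For (P1), the equalities $0\wedge d(x)=p\wedge d(x)=0$ together with $0\wedge d(z)=p\wedge d(z)=0$ (the latter from the previous paragraph) collapse both indicated segments to the identity morphism at the range, namely $x(0,0)=v=z(0,0)$, so (P1) holds as well. Therefore $[x;(0,p)]=[z;(0,p)]$, as required.

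The only genuine step is the transfer in the second paragraph: the point is that $p\wedge d(x)=0$ forces $x$ to be \emph{already maximal} at its range in every coordinate where $p$ is positive, and by the boundary-path condition this maximality is a feature of the vertex $v$ alone, hence shared by every boundary path emanating from $v$. Once this is established, the verification of (P1)--(P3) is purely formal, and local convexity enters only as the standing hypothesis guaranteeing that the boundary-path formalism is available.
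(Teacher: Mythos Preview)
The paper does not supply its own proof of this lemma; it is quoted verbatim from \cite[Lemma~2.2]{Robert} and left without argument. Your proof is correct and is precisely the intended one: the boundary-path condition applied at the object $0$ turns $d(x)_{i}=0$ into the vertex property $v\Lambda^{e_{i}}=\emptyset$, which then forces $d(z)_{i}=0$ for every $z\in v\Lambda^{\leq\infty}$; the verification of (P1)--(P3) is, as you say, purely formal once both segments collapse to $x(0,0)=v=z(0,0)$.
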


\begin{lemma}\label{lem4.3}
Let $\Lambda$ be a row-finite locally convex $k$-graph and $\tilde{\Lambda}$ be its desourcification. Then
\begin{enumerate}
\item $\tilde{\Lambda}$ is aperiodic if and only if $\Lambda$ is.
\item $\Lambda^{0}$ satisfies Condition (MT3) if and only if $\tilde{\Lambda}^{0}$ so does.
\end{enumerate}
\end{lemma}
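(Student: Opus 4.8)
The plan is to give vertices of $\tilde\Lambda$ a concrete description and then transport both properties across the maps $\iota,\pi$ of the Remark together with a boundary-path correspondence. Recall that $[x;m]=[y;n]$ exactly when $x(m\wedge d(x))=y(n\wedge d(y))$ and $m-m\wedge d(x)=n-n\wedge d(y)$, so each vertex of $\tilde\Lambda$ is determined by its \emph{underlying vertex} $u=x(m\wedge d(x))\in\Lambda^0$ and its \emph{offset} $\ell=m-m\wedge d(x)\in\Bbb N^k$; under this dictionary $\iota(\Lambda^0)$ is the set of offset-zero vertices and $\pi$ forgets the offset, $\pi([x;m])=\iota(u)$. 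Since $\iota$ and $\pi$ are $k$-graph morphisms they preserve reachability, and since $\pi\circ\iota=\iota$ they identify reachability inside $\iota(\Lambda)$ with reachability in $\Lambda$. I would use two further ingredients. First, the map $x\mapsto\tilde x$, where $\tilde x(p,q):=[x;(p,q)]$, is a degree-preserving bijection of $v\Lambda^{\leq\infty}$ onto $\iota(v)\tilde\Lambda^\infty$ (a functor check shows $\tilde x$ is a well-defined infinite path rooted at $\iota(r(x))$, and surjectivity/injectivity is the boundary-path correspondence of \cite{Far,clark}); every infinite path rooted at $[x;m_0]$ is then $\sigma^{m_0}(\tilde x)$. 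Second, Lemma \ref{lemma7} controls offsets: if $\ell_i>0$ for a vertex $(u,\ell)$ then $u$ emits nothing in direction $i$, so by Lemma \ref{lemma7} every $z\in u\Lambda^{\leq\infty}$ satisfies $\ell\wedge d(z)=0$, whence $(u,\ell)=[z;\ell]$ for each such $z$.

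The heart of part (1) will be the identity, for fixed $x\in\Lambda^{\leq\infty}$ and $m\neq n$,
\[
\sigma^m(\tilde x)=\sigma^n(\tilde x)\iff m-m\wedge d(x)=n-n\wedge d(x)\ \text{and}\ \sigma^{m\wedge d(x)}(x)=\sigma^{n\wedge d(x)}(x),
\]
whose right-hand side is exactly the failure of $x$ to witness aperiodicity for the pair $(m,n)$. I would prove this by unwinding $[x;(m+p,m+q)]=[x;(n+p,n+q)]$ through (P1)--(P3): (P3) is automatic, (P2) at $p=0$ gives the offset equality, and (P1) gives the tail equality, while conversely these two equalities force (P1) and (P2) for all $p\leq q$ (the computation splits into coordinates where $d(x)_i$ is finite, where $m\wedge d(x)$ and $n\wedge d(x)$ must agree, and coordinates where $d(x)_i=\infty$, where the common offset absorbs the difference). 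Granting this, aperiodicity transfers both ways. If $\tilde\Lambda$ is aperiodic, then for $v\in\Lambda^0$ and $m\neq n$ a distinguishing $\tilde y\in\iota(v)\tilde\Lambda^\infty$ equals some $\tilde x$ with $x\in v\Lambda^{\leq\infty}$, and the identity makes $x$ witness aperiodicity of $\Lambda$. Conversely, given $\tilde v=(u,\ell)$ and $m\neq n$, I would apply aperiodicity of $\Lambda$ at $u$ to the pair $(m+\ell,n+\ell)$ to get $x\in u\Lambda^{\leq\infty}$ distinguishing it; by Lemma \ref{lemma7}, $[x;\ell]=\tilde v$, and $\sigma^{m}(\sigma^{\ell}(\tilde x))=\sigma^{m+\ell}(\tilde x)\neq\sigma^{n+\ell}(\tilde x)=\sigma^{n}(\sigma^{\ell}(\tilde x))$ exhibits an infinite path from $\tilde v$ witnessing aperiodicity of $\tilde\Lambda$.

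For part (2), the direction $\tilde\Lambda\Rightarrow\Lambda$ is immediate: given $v_1,v_2\in\Lambda^0$, apply (MT3) in $\tilde\Lambda$ to $\iota(v_1),\iota(v_2)$ to get a common descendant $\tilde w$, then apply $\pi$ to descend to a common descendant $w$ of $v_1,v_2$ in $\Lambda$. For $\Lambda\Rightarrow\tilde\Lambda$, I would write $\tilde v_i=(u_i,\ell_i)$ and use (MT3) in $\Lambda$ to produce $w\in\Lambda^0$ and paths $\alpha_i\in u_i\Lambda w$. The key point is that blocked directions propagate: if $u_i$ emits nothing in direction $j$ then, $\alpha_i z$ being a boundary path from $u_i$ for any $z\in w\Lambda^{\leq\infty}$, Lemma \ref{lemma7} forces $d(\alpha_i)_j=d(z)_j=0$, so $w$ is blocked in every direction of $\mathrm{supp}(\ell_1)\cup\mathrm{supp}(\ell_2)$. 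Fixing one such $z$ and setting $x_i=\alpha_i z$, a direct meet computation shows that $[x_i;(\ell_i,\ell_i+d(\alpha_i))]$ runs from $\tilde v_i$ to $(w,\ell_i)$, and that $[z;(\ell_i,\ell_1\vee\ell_2)]$ runs from $(w,\ell_i)$ to $(w,\ell_1\vee\ell_2)$. Thus $(w,\ell_1\vee\ell_2)$ is a common descendant of $\tilde v_1,\tilde v_2$, so $\tilde\Lambda^0$ satisfies (MT3).

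The hard part throughout will be bookkeeping the offsets, i.e. tracking the terms $m\wedge d(x)$ under shifting and composition. In part (1) this is precisely what makes the shift-equality identity nontrivial, and is where genuine periodicity (the infinite coordinates of $d(x)$) must be separated from the finite ones; in part (2) it is what forces the propagation-of-blocked-directions argument and the careful choice of a single $z$ through $w$ so the two offsets can be merged into $\ell_1\vee\ell_2$. A secondary point to pin down is the boundary-path bijection $v\Lambda^{\leq\infty}\cong\iota(v)\tilde\Lambda^\infty$, which I would quote from \cite{Far,clark} rather than reprove.
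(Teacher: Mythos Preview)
Your proposal is correct, and for part~(2) it is essentially the paper's argument with only cosmetic differences: both prove $\tilde\Lambda\Rightarrow\Lambda$ by pushing through $\pi$, and both prove $\Lambda\Rightarrow\tilde\Lambda$ by normalising the two given vertices to offset form, applying (MT3) in $\Lambda$ to the underlying vertices, invoking Lemma~\ref{lemma7}, and exhibiting explicit paths in $\tilde\Lambda$ to a common target. The only visible difference is the target you land on: you choose $(w,\ell_1\vee\ell_2)$ after the ``blocked directions propagate to $w$'' observation, whereas the paper goes directly to $[z;t]$ with $t=(m-d(\lambda))\vee(n-d(\mu))\vee 0$; both choices work and the verifications are of comparable length.

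For part~(1) there is a genuine difference in route. The paper does not prove it at all: it simply cites \cite[Proposition~3.6]{Robert}. Your plan instead gives a self-contained argument via the boundary-path bijection $v\Lambda^{\leq\infty}\cong\iota(v)\tilde\Lambda^\infty$ and the shift identity $\sigma^m(\tilde x)=\sigma^n(\tilde x)\iff m-m\wedge d(x)=n-n\wedge d(x)$ and $\sigma^{m\wedge d(x)}(x)=\sigma^{n\wedge d(x)}(x)$. This is more work but buys independence from \cite{Robert}; it is in fact a repackaging of the Robertson--Sims proof in the offset language you set up. One small point to make explicit when you write it out: in the converse direction, after choosing $x\in u\Lambda^{\leq\infty}$ with $\ell\wedge d(x)=0$, you should note that $(m+\ell)\wedge d(x)=m\wedge d(x)$ and $(n+\ell)\wedge d(x)=n\wedge d(x)$ coordinatewise (since $\ell_j>0$ forces $d(x)_j=0$), so that the $\Lambda$-aperiodicity witness for $(m+\ell,n+\ell)$ really does feed into your shift identity as claimed.
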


\begin{proof}
The statement (1) is \cite[Proposition 3.6]{Robert}. For (2), suppose that $\Lambda^{0}$ satisfies Condition (MT3) and take arbitrary vertices $[x;m] , [y;n]\in \tilde{\Lambda}^{0}$. Without loss of generality, we may assume that $m\wedge d(x)=n\wedge d(y)=0$. Since $x(0),y(0)$ belong to $\Lambda^{0}$, there are $\lambda\in x(0)\Lambda$ and $\mu\in y(0)\Lambda$ such that $s(\lambda)=s(\mu)$. Take some $z\in s(\lambda)\Lambda^{\leq\infty}$. Then $\lambda z\in x(0)\Lambda^{\leq\infty}$, $\mu z\in y(0)\Lambda^{\leq\infty}$, and by Lemma \ref{lemma7} we have $[x;(0,m)]=[\lambda z; (0,m)]$ and  $[y;(0,n)]=[\mu z;(0,n)]$. If $t:=(m-d(\lambda))\vee (n-d(\mu))\vee 0$, then we have
\begin{align*}
r([\lambda z; (m, t+d(\lambda))])&=[\lambda z;m]=[x;m],\\
r([\mu z; (n, t+d(\mu))])&=[\mu z;n]=[y;n],\\
s([\lambda z; (m, t+d(\lambda))])&=[\lambda z;t+d(\lambda)]=[z;t], ~ \mathrm{and}\\
s([\mu z; (n, t+d(\mu))])&=[\mu z;t+d(\mu)]=[z;t].
\end{align*}
Therefore, $[\lambda z; (m, t+d(\lambda))])\in [x;m]\tilde{\Lambda}$ and $([\mu z; (n, t+d(\mu))])\in [y;n]\tilde{\Lambda}$ with the same source. This says that $\tilde{\Lambda}^{0}$ satisfies Condition (MT3), as desired.

Conversely, assume that $\tilde{\Lambda}^{0}$ satisfies Condition (MT3). If $v,w\in \Lambda^{0}$, there are $\lambda\in v\tilde{\Lambda}$ and $\mu\in w\tilde{\Lambda}$ such that $s(\lambda)=s(\mu)$. So, $\pi(\lambda) , \pi(\mu)\in \Lambda$ and we have
\begin{align*}
r(\pi(\lambda))&=\pi(r(\lambda))=\pi(v)=v,\\
r(\pi(\mu))&=\pi(r(\mu))=\pi(w)=w, ~ \mathrm{and} \\
s(\pi(\lambda))&=\pi(s(\lambda))=\pi(s(\mu))=s(\pi(\mu)).
\end{align*}
Thus $\pi(\lambda)\in v\Lambda , \pi(\mu)\in w\Lambda$ and $s(\pi(\lambda))=s(\pi(\mu))$. This implies that $\Lambda^{0}$ satisfies Condition (MT3).
\end{proof}

\begin{theorem}\label{thm4.5}
Let $\Lambda$ be a row-finite locally convex $k$-graph and $R$ be a unital commutative ring. Then $\mathrm{KP}_{R}(\Lambda)$ is prime if and only if R is an ID and $\Lambda^{0}$ satisfies Condition (MT3).
\end{theorem}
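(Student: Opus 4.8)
The plan is to transfer the statement to the desourcification $\tilde{\Lambda}$, where Theorem \ref{thm3.3} already applies, and then to translate the graph-theoretic condition back to $\Lambda$ using Lemma \ref{lem4.3}. Since $\tilde{\Lambda}$ is a row-finite $k$-graph with no sources, Theorem \ref{thm3.3} tells us that $\mathrm{KP}_R(\tilde{\Lambda})$ is prime precisely when $R$ is an ID and $\tilde{\Lambda}^0$ satisfies Condition (MT3). The whole argument is then a concatenation of three equivalences.

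First I would invoke \cite[Corollary 7.5]{clark}, which provides a surjective Morita context between $\mathrm{KP}_R(\tilde{\Lambda})$ and $\mathrm{KP}_R(\Lambda)$, together with the fact (recorded just before this theorem, via \cite[Theorem 7.4]{clark}) that primeness is invariant under such Morita contexts. This yields the equivalence that $\mathrm{KP}_R(\Lambda)$ is prime if and only if $\mathrm{KP}_R(\tilde{\Lambda})$ is prime. Next I would apply Theorem \ref{thm3.3} to $\tilde{\Lambda}$, which has no sources, to rewrite the latter condition as the conjunction ``$R$ is an ID and $\tilde{\Lambda}^0$ satisfies (MT3)''. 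Finally, Lemma \ref{lem4.3}(2) asserts that $\tilde{\Lambda}^0$ satisfies (MT3) if and only if $\Lambda^0$ does, so I can replace $\tilde{\Lambda}^0$ by $\Lambda^0$ and obtain exactly the claimed characterization.

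Because all of the genuine work has already been carried out in Theorem \ref{thm3.3}, Lemma \ref{lem4.3}, and the Morita-theoretic input from \cite{clark}, there is essentially no computational obstacle here; the proof is a clean chaining of previously established equivalences. The only point demanding care is ensuring that the invariance of primeness under the surjective Morita context is genuinely applicable in \emph{both} directions rather than only one—that is, that the cited Morita equivalence preserves and reflects primeness. This is precisely the content of the remark preceding the theorem, so no further justification is needed, and the argument reduces to stating the three equivalences in order.
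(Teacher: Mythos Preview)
Your proposal is correct and follows essentially the same approach as the paper: both arguments pass to the desourcification $\tilde{\Lambda}$ via the surjective Morita context of \cite[Corollary 7.5]{clark}, apply Theorem \ref{thm3.3} there, and transfer Condition (MT3) back using Lemma \ref{lem4.3}(2). The only cosmetic difference is that you phrase it as a single chain of equivalences, whereas the paper treats the two implications separately.
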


\begin{proof}
Let $\mathrm{KP}_{R}(\Lambda)$ be a prime ring. \cite[Corollary 7.5]{clark} yields that there is a surjective Morita context between  $\mathrm{KP}_{R}(\Lambda)$ and $\mathrm{KP}_{R}(\tilde{\Lambda})$. As the primeness is preserved under surjective Morita contexts, $\mathrm{KP}_{R}(\tilde{\Lambda})$ is also a prime ring. Since $\tilde{\Lambda}$ is a row-finite $k$-graph with no sources, Theorem \ref{thm3.3} implies that $R$ is an ID and $\tilde{\Lambda}^{0}$ satisfies Condition (MT3). Hence, $\Lambda^{0}$ satisfies Condition (MT3) either by Lemma \ref{lem4.3}.

For the converse, let $\Lambda^{0}$ satisfy Condition (MT3) and $R$ be an ID. As $\tilde{\Lambda}^{0}$ satisfies Condition (MT3) by Lemma \ref{lem4.3}, we see that $\mathrm{KP}_{R}(\tilde{\Lambda})$ is a prime ring by applying Theorem \ref{thm3.3}. Now the Morita equivalence between $\mathrm{KP}_{R}(\tilde{\Lambda})$ and $\mathrm{KP}_{R}(\Lambda)$ gives the result.
\end{proof}

Now, we consider prime graded ideals in the Kumjian-Pask algebra $\mathrm{KP}_{R}(\Lambda)$, when $\Lambda$ is a row-finite locally convex $k$-graph. As a basic graded ideal $I_H$ of $\mathrm{KP}_{R}(\Lambda)$ is prime if and only if the quotient $\mathrm{KP}_{R}(\Lambda)/I_H \cong \mathrm{KP}_{R}(\Lambda\setminus H)$ is prime, we may use Theorem \ref{thm4.5} to characterize prime basic graded ideals of the Kumjian-Pask algebra. Recall from \cite[Theorem 9.4]{clark} that every basic graded ideal of $\mathrm{KP}_R(\Lambda)$ is of the form $I_H$ for a saturated hereditary subset $H$ of $\Lambda^0$. Note that when $R$ is a field, every ideal of $\mathrm{KP}_R(\Lambda)$ is basic. We first state the following lemma.

\begin{lemma}\label{lem4.5}
Let $\Lambda$ be a row-finite locally convex $k$-graph and $R$ be a unital commutative ring. For a saturated and hereditary subset $H$ of $\Lambda^{0}$, $\Lambda\setminus H=(\Lambda^{0}\setminus H , s^{-1}(\Lambda^{0}\setminus H) , r,s)$ is a row-finite locally convex $k$-graph and $\mathrm{KP}_{R}(\Lambda\setminus H)$ is canonically isomorphic to $\mathrm{KP}_{R}(\Lambda)/ I_{H}$.
\end{lemma}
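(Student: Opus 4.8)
The plan is to use the universal property of the Kumjian-Pask algebra to produce a canonical graded homomorphism $\phi\colon\mathrm{KP}_R(\Lambda\setminus H)\to\mathrm{KP}_R(\Lambda)/I_H$ and then to show it is an isomorphism by the graded uniqueness theorem. There are really two things to establish: first that $\Lambda\setminus H$ is again a row-finite locally convex $k$-graph, so that $\mathrm{KP}_R(\Lambda\setminus H)$ even makes sense, and second that the canonical map is bijective. Throughout I write $q\colon\mathrm{KP}_R(\Lambda)\to\mathrm{KP}_R(\Lambda)/I_H$ for the quotient map.

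For the first part, I would note that the morphisms of $\Lambda\setminus H$ are exactly the $\lambda\in\Lambda$ with $s(\lambda)\notin H$, and check closure. If $\lambda=\mu\nu$ in $\Lambda$ with $s(\lambda)\notin H$, then $s(\nu)=s(\lambda)\notin H$, while $r(\nu)=s(\mu)\notin H$ because otherwise hereditariness of $H$ applied to $\nu\in r(\nu)\Lambda s(\nu)$ would force $s(\nu)\in H$; this yields both closure under composition and the factorization property by restriction. Row-finiteness is immediate since $v(\Lambda\setminus H)^n\subseteq v\Lambda^n$. Local convexity is the only nontrivial point: given $\lambda\in v(\Lambda\setminus H)^{e_i}$ and $\mu\in v(\Lambda\setminus H)^{e_j}$, local convexity of $\Lambda$ gives $s(\lambda)\Lambda^{e_j}\neq\emptyset$, and here I would invoke saturation. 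If every $e_j$-edge out of $s(\lambda)$ had its source in $H$, then, since $s(\lambda)\Lambda^{e_j}\neq\emptyset$ forces $s(\lambda)\Lambda^{\le e_j}=s(\lambda)\Lambda^{e_j}$, saturation of $H$ would give $s(\lambda)\in H$, a contradiction. Hence $s(\lambda)(\Lambda\setminus H)^{e_j}\neq\emptyset$, as required.

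The heart of the argument is the identity $v(\Lambda\setminus H)^{\le n}=\{\lambda\in v\Lambda^{\le n}:s(\lambda)\notin H\}$ for every $v\notin H$ and $n\in\mathbb N^k$. The inclusion $\supseteq$ is routine, but $\subseteq$ is exactly where the subtlety lies, since deleting $H$ can create new sinks, so a priori $(\Lambda\setminus H)^{\le n}$ could contain paths that are not $\le n$-maximal in $\Lambda$. I would rule this out with the same saturation trick: if $\lambda\in v(\Lambda\setminus H)^{\le n}$ with $d(\lambda)+e_i\le n$ but $s(\lambda)\Lambda^{e_i}\neq\emptyset$, then $s(\lambda)(\Lambda\setminus H)^{e_i}=\emptyset$ means all $e_i$-edges from $s(\lambda)$ land in $H$, and because $s(\lambda)\Lambda^{\le e_i}=s(\lambda)\Lambda^{e_i}$ in this case, saturation forces $s(\lambda)\in H$, a contradiction; hence $s(\lambda)\Lambda^{e_i}=\emptyset$ and $\lambda\in v\Lambda^{\le n}$. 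This identity is precisely what makes $(\mathrm{KP4})$ survive the quotient, and I expect it to be the main obstacle.

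With these in hand I would set $Q_v=q(p_v)$, $T_\lambda=q(s_\lambda)$, and $T_{\lambda^*}=q(s_{\lambda^*})$ for $v\in\Lambda^0\setminus H$ and $\lambda\in(\Lambda\setminus H)^{\neq 0}$, and verify that $(Q,T)$ is a Kumjian-Pask $(\Lambda\setminus H)$-family: $(\mathrm{KP1})$--$(\mathrm{KP3})$ pass through $q$ directly (using $(\Lambda\setminus H)^{\le n}\subseteq\Lambda^{\le n}$ for $(\mathrm{KP3})$), while for $(\mathrm{KP4})$ I apply $q$ to $p_v=\sum_{\lambda\in v\Lambda^{\le n}}s_\lambda s_{\lambda^*}$, kill the terms with $s(\lambda)\in H$ (these lie in $I_H$), and use the identity above to recognize the surviving sum as $\sum_{\lambda\in v(\Lambda\setminus H)^{\le n}}T_\lambda T_{\lambda^*}$. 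The universal property then yields a graded homomorphism $\phi$ with $\phi(p_v)=Q_v$, $\phi(s_\lambda)=T_\lambda$, and $\phi(s_{\lambda^*})=T_{\lambda^*}$. Surjectivity follows from the spanning set $\{q(s_\alpha s_{\beta^*})\}$, since terms with $s(\alpha)\in H$ vanish and the remaining ones are values of $\phi$. For injectivity I would apply the graded uniqueness theorem \cite{aranda,clark} (valid since $\Lambda\setminus H$ is row-finite and locally convex): $\phi$ is graded, and for $v\notin H$ and $r\in R\setminus\{0\}$ we have $\phi(rp_v)=q(rp_v)\neq 0$ because $rp_v\notin I_H$, as $I_H$ is basic and $H_{I_H}=H$ by \cite[Theorem 9.4]{clark}. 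Hence $\phi$ is injective, and therefore an isomorphism.
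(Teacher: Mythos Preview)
Your proof is correct and follows precisely the route the paper indicates: the paper simply cites \cite[Theorem~5.2]{raeburn} for the $k$-graph structure of $\Lambda\setminus H$ and refers to the proof of \cite[Theorem~5.5]{aranda} for the isomorphism, which is exactly the universal-property-plus-graded-uniqueness argument you spell out. Your explicit verification of $v(\Lambda\setminus H)^{\le n}=\{\lambda\in v\Lambda^{\le n}:s(\lambda)\notin H\}$ is the one genuinely new detail needed to adapt the no-sources argument of \cite{aranda} to the locally convex setting, and you handle it correctly.
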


\begin{proof}
First, by \cite[Theorem 5.2]{raeburn}, $\Lambda\setminus H$ is a row-finite locally convex $k$-graph. Then, similar to the proof of \cite[Theorem 5.5]{aranda}, we may define an isomorphism between $\mathrm{KP}_{R}(\Lambda\setminus H)$ and $\mathrm{KP}_{R}(\Lambda)/ I_{H}$.
\end{proof}

We usually say a locally convex $k$-graph $\Lambda$ to be \emph{strongly aperiodic} if $\Lambda\setminus H$ is aperiodic for every hereditary saturated subset $H$ of $\Lambda^0$. When $\Lambda$ is strongly aperiodic with no sources, \cite[Corollary 5.7]{aranda} follows that every basic ideal of $\mathrm{KP}_R(\Lambda)$ is of the form $I_H$ for some saturated hereditary set $H$. Similarly, we may use \cite[Theorem 9.4]{clark} to have the same result for $\mathrm{KP}_R(\Lambda)$ when $\Lambda$ is a locally convex $k$-graph.

\begin{proposition}\label{prop4.6}
Let $\Lambda$ be a row-finite locally convex $k$-graph. A basic graded ideal $I_{H}$ of $\mathrm{KP}_R(\Lambda)$ is prime if and only if $\Lambda^{0}\setminus H$ is a maximal tail and $R$ is an ID. In particular,
\begin{enumerate}
  \item if $R$ is a field, every prime graded ideal of $\mathrm{KP}_R(\Lambda)$ is of the form $I_H$, where $\Lambda^0\setminus H$ is a maximal tail;
  \item if $R$ is a field and $\Lambda$ is strongly aperiodic, every prime ideal of $\mathrm{KP}_R(\Lambda)$ is of the form $I_H$, where $\Lambda^0\setminus H$ is a maximal tail.
\end{enumerate}
\end{proposition}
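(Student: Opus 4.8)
The plan is to reduce the primeness of the ideal $I_H$ to the primeness of a Kumjian-Pask algebra to which Theorem \ref{thm4.5} applies directly. First I would use the standard ring-theoretic fact that an ideal $I_H$ of $\mathrm{KP}_R(\Lambda)$ is prime if and only if the quotient ring $\mathrm{KP}_R(\Lambda)/I_H$ is a prime ring. By Lemma \ref{lem4.5}, this quotient is canonically isomorphic to $\mathrm{KP}_R(\Lambda\setminus H)$, where $\Lambda\setminus H$ is again a row-finite locally convex $k$-graph. Hence $I_H$ is prime precisely when $\mathrm{KP}_R(\Lambda\setminus H)$ is prime.

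Next I would apply Theorem \ref{thm4.5} to the $k$-graph $\Lambda\setminus H$: the algebra $\mathrm{KP}_R(\Lambda\setminus H)$ is prime if and only if $R$ is an ID and $(\Lambda\setminus H)^0=\Lambda^0\setminus H$ satisfies Condition (MT3). It then remains to translate this into the statement that $\Lambda^0\setminus H$ is a maximal tail. Since $H$ is saturated and hereditary, the complement $\Lambda^0\setminus H$ already satisfies Conditions (MT1) and (MT2) by the observation preceding Theorem \ref{thm3.3}, so a maximal tail is obtained exactly when (MT3) holds as well.

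The point to verify is that Condition (MT3) computed inside the quotient graph $\Lambda\setminus H$ coincides with Condition (MT3) for $\Lambda^0\setminus H$ regarded as a subset of $\Lambda$. This is almost immediate from the definition $\Lambda\setminus H=s^{-1}(\Lambda^0\setminus H)$: for $w\notin H$, a morphism $\lambda$ with $s(\lambda)=w$ lies in $\Lambda\setminus H$ precisely because its source avoids $H$, so $v(\Lambda\setminus H)w=v\Lambda w$ for all $v,w\notin H$, and the two formulations of (MT3) agree. Combining this with the previous paragraph yields the equivalence: $I_H$ is prime if and only if $R$ is an ID and $\Lambda^0\setminus H$ is a maximal tail.

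For the two special cases I would invoke the classification of basic (graded) ideals. When $R$ is a field, every ideal of $\mathrm{KP}_R(\Lambda)$ is basic, and by \cite[Theorem 9.4]{clark} every basic graded ideal has the form $I_H$ for a saturated hereditary $H$; applying the equivalence just proved (noting that a field is an ID) shows that the prime graded ideals are exactly those $I_H$ with $\Lambda^0\setminus H$ a maximal tail, giving (1). For (2), strong aperiodicity upgrades the classification so that every basic ideal---hence, $R$ being a field, every ideal---is of the form $I_H$, which is automatically graded; therefore the word ``graded'' may be dropped, and every prime ideal is $I_H$ with $\Lambda^0\setminus H$ a maximal tail. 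Honestly, this proposition is largely an assembly of Theorem \ref{thm4.5}, Lemma \ref{lem4.5}, and the ideal-classification results; the only place demanding care is the bookkeeping across the quotient, namely matching the maximal-tail conditions for $\Lambda^0\setminus H$ with the (MT1)--(MT3) conditions for $(\Lambda\setminus H)^0$.
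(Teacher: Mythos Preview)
Your proposal is correct and follows essentially the same approach as the paper: both reduce primeness of $I_H$ to primeness of $\mathrm{KP}_R(\Lambda\setminus H)$ via Lemma~\ref{lem4.5}, then invoke Theorem~\ref{thm4.5} and the observation that (MT1)--(MT2) for $\Lambda^0\setminus H$ are equivalent to $H$ being hereditary and saturated. Your additional bookkeeping that (MT3) in $\Lambda\setminus H$ agrees with (MT3) for $\Lambda^0\setminus H$ inside $\Lambda$, and your explicit derivation of the ``in particular'' clauses from \cite[Theorem~9.4]{clark} and strong aperiodicity, spell out details the paper leaves implicit.
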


\begin{proof}
Let $I_{H}$ be a prime ideal of ${\mathrm{KP}_{R}(\Lambda)}$. Since ${\mathrm{KP}_{R}(\Lambda)}/{I_{H}}\cong \mathrm{KP}_{R}(\Lambda\setminus H)$ by Lemma \ref{lem4.5}, $\mathrm{KP}_{R}(\Lambda\setminus H)$ is a prime ring. Therefore, Theorem \ref{thm4.5} implies that $R$ is an ID and $\Lambda^{0}\setminus H$ satisfies Condition (MT3). Clearly, $\Lambda^0\setminus H$ also satisfies Conditions (MT1) and (MT2) because  $H$ is hereditary and saturated. Hence, $\Lambda^0\setminus H$ is a maximal tail.

Conversely, suppose that $\Lambda^{0}\setminus H$ is a maximal tail and $R$ is an ID. By Theorem \ref{thm4.5}, $\mathrm{KP}_{R}(\Lambda\setminus H)$ is a prime ring. Since ${\mathrm{KP}_{R}(\Lambda)}/ {I_{H}}\cong \mathrm{KP}_{R}(\Lambda\setminus H)$, we conclude that $I_{H}$ is a prime ideal of $\mathrm{KP}_{R}(\Lambda)$.
\end{proof}

In Theorem \ref{thm3.8}, we characterize the primitive Kumjian-Pask algebras when the underlying $k$-graphs have no sources. We may apply desourcifying method to obtain a same result for all row-finite locally convex $k$-graphs.

\begin{theorem}\label{thm4.7}
Let $\Lambda$ be a row-finite locally convex $k$-graph and $R$ be a unital commutative ring. Then $\mathrm{KP}_{R}(\Lambda)$ is primitive if and only if
\begin{enumerate}
\item[1)] $\Lambda^{0}$ satisfies Condition (MT3),
\item[2)] $\Lambda$ is aperiodic, and
\item[3)] $R$ is a field.
\end{enumerate}
\end{theorem}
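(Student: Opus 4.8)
The plan is to reduce the locally convex case to the sourceless case already settled in Theorem~\ref{thm3.8}, in exact parallel with the way Theorem~\ref{thm4.5} reduces Theorem~\ref{thm3.3}. The engine is the desourcification $\tilde{\Lambda}$: by \cite[Corollary 7.5]{clark} there is a surjective Morita context between $\mathrm{KP}_{R}(\Lambda)$ and $\mathrm{KP}_{R}(\tilde{\Lambda})$, and by \cite[Theorem 7.4]{clark} primitivity is invariant under such contexts. Hence $\mathrm{KP}_{R}(\Lambda)$ is primitive if and only if $\mathrm{KP}_{R}(\tilde{\Lambda})$ is primitive, and the whole problem is transferred to the graph $\tilde{\Lambda}$, which has no sources.

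Next I would apply Theorem~\ref{thm3.8} to $\mathrm{KP}_{R}(\tilde{\Lambda})$: since $\tilde{\Lambda}$ is a row-finite $k$-graph with no sources, that algebra is primitive if and only if $\tilde{\Lambda}^{0}$ satisfies Condition (MT3), $\tilde{\Lambda}$ is aperiodic, and $R$ is a field. It then remains only to transport the two graph-theoretic conditions back to $\Lambda$, which is precisely what Lemma~\ref{lem4.3} supplies: part (1) gives that $\tilde{\Lambda}$ is aperiodic if and only if $\Lambda$ is, and part (2) gives that $\tilde{\Lambda}^{0}$ satisfies (MT3) if and only if $\Lambda^{0}$ does. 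The condition on $R$ is untouched, as the Morita context fixes the coefficient ring.

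Chaining the three biconditionals then yields the statement: $\mathrm{KP}_{R}(\Lambda)$ is primitive $\iff$ $\mathrm{KP}_{R}(\tilde{\Lambda})$ is primitive $\iff$ $\tilde{\Lambda}^{0}$ satisfies (MT3), $\tilde{\Lambda}$ is aperiodic and $R$ is a field $\iff$ $\Lambda^{0}$ satisfies (MT3), $\Lambda$ is aperiodic and $R$ is a field, which is exactly conditions (1)--(3).

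I do not expect a genuine obstacle here: the substantive work has been front-loaded into Theorem~\ref{thm3.8} (the sourceless characterization) and Lemma~\ref{lem4.3} (the invariance of (MT3) and aperiodicity under desourcification), so this proof is a bookkeeping assembly of results already in hand. The only point deserving care is that the transfer of primitivity rests on a \emph{surjective Morita context} rather than on a strict Morita equivalence; for this I would rely directly on \cite[Theorem 7.4]{clark}. Because every link in the chain is a two-sided implication, there is no need to track the direction of any implication separately, so the forward and converse directions of the theorem are obtained simultaneously.
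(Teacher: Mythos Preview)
Your proposal is correct and follows essentially the same approach as the paper: reduce to the sourceless case via the desourcification $\tilde{\Lambda}$ and the surjective Morita context of \cite[Corollary~7.5]{clark}, invoke Theorem~\ref{thm3.8} for $\mathrm{KP}_{R}(\tilde{\Lambda})$, and transfer the graph-theoretic conditions with Lemma~\ref{lem4.3}. The paper argues the two directions separately rather than as a single chain of biconditionals, but the ingredients and logic are identical.
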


\begin{proof}
Let $\mathrm{KP}_{R}(\Lambda)$ be a primitive ring. By \cite[Corollary 7.5]{clark}, there is surjective Morita context between $\mathrm{KP}_{R}(\Lambda)$ and $\mathrm{KP}_{R}(\tilde{\Lambda})$. As the primitivity is preserved under surjective Morita contexts, $\mathrm{KP}_{R}(\tilde{\Lambda})$ is also a primitive ring. Theorem \ref{thm3.8} implies that $\tilde{\Lambda}^{0}$ satisfies Condition (MT3), $\tilde{\Lambda}$ is aperiodic and $R$ is a field. Therefore, $\Lambda^{0}$ satisfies Condition (MT3) and $\Lambda$ is aperiodic by Lemma \ref{lem4.3}.

Conversely, let the above three conditions hold. By Lemma \ref{lem4.3}, $\tilde{\Lambda}^{0}$ satisfies Condition (MT3) and $\tilde{\Lambda}$ is aperiodic. So, $\mathrm{KP}_{R}(\tilde{\Lambda})$ is a primitive ring. The Morita equivalence between $\mathrm{KP}_{R}(\tilde{\Lambda})$ and $\mathrm{KP}_{R}(\Lambda)$ implies that $\mathrm{KP}_{R}(\Lambda)$ is also a primitive ring.
\end{proof}

\begin{proposition}\label{prop4.8}
Let $\Lambda$ be a row-finite locally convex $k$-graph and $R$ be a field. A graded ideal $I_{H}$ of $\mathrm{KP}_R(\Lambda)$ is primitive if and only if the $k$-graph $\Lambda\setminus H$ is aperiodic and $\Lambda^{0}\setminus H$ is a maximal tail. In particular, if $\Lambda$ is strongly aperiodic, then every primitive ideal of $\mathrm{KP}_R(\Lambda)$ is of the form $I_H$, where $\Lambda^0\setminus H$ is a maximal tail.
\end{proposition}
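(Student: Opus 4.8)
The plan is to follow the proof of Proposition \ref{prop4.6} essentially verbatim, replacing primeness by primitivity and Theorem \ref{thm4.5} by Theorem \ref{thm4.7}. The mechanism is the standard fact that a two-sided ideal $I$ of a ring $A$ is primitive exactly when the quotient $A/I$ is a primitive ring: a primitive ideal is the annihilator of a simple $A$-module $M$, on which $A/\mathrm{Ann}(M)$ then acts faithfully and simply, and conversely any faithful simple $A/I$-module pulls back to a simple $A$-module with annihilator precisely $I$.

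First I would apply Lemma \ref{lem4.5} to identify $\mathrm{KP}_R(\Lambda)/I_H$ with $\mathrm{KP}_R(\Lambda\setminus H)$, so that $I_H$ is primitive if and only if $\mathrm{KP}_R(\Lambda\setminus H)$ is a primitive ring. As $\Lambda\setminus H$ is again a row-finite locally convex $k$-graph by Lemma \ref{lem4.5}, Theorem \ref{thm4.7} applies: $\mathrm{KP}_R(\Lambda\setminus H)$ is primitive precisely when $(\Lambda\setminus H)^0=\Lambda^0\setminus H$ satisfies Condition (MT3), $\Lambda\setminus H$ is aperiodic, and $R$ is a field. Since $R$ is assumed to be a field, the third condition is automatic, and since $H$ is saturated and hereditary, $\Lambda^0\setminus H$ already satisfies Conditions (MT1) and (MT2) (as recorded just before Theorem \ref{thm3.3}). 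Hence the remaining requirement that $\Lambda^0\setminus H$ satisfy (MT3) is equivalent to $\Lambda^0\setminus H$ being a maximal tail, which yields the asserted equivalence.

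For the final clause I would argue as in Proposition \ref{prop4.6}(2): because $R$ is a field, every ideal of $\mathrm{KP}_R(\Lambda)$ is basic, and because $\Lambda$ is strongly aperiodic, every basic ideal has the form $I_H$ for some saturated hereditary $H$ (via \cite[Theorem 9.4]{clark}, as discussed before Proposition \ref{prop4.6}). Thus every primitive ideal is a graded ideal $I_H$; applying the main equivalence and using that strong aperiodicity of $\Lambda$ forces $\Lambda\setminus H$ to be aperiodic, we conclude that $\Lambda^0\setminus H$ is a maximal tail. The only non-formal ingredient is the correspondence between primitivity of the ideal and of the quotient ring; the rest is a direct transport of Theorem \ref{thm4.7} through the isomorphism of Lemma \ref{lem4.5}, exactly parallel to the prime case, so I expect the sole minor obstacle to be the bookkeeping in the ``in particular'' clause, where one must combine the field hypothesis (all ideals basic) with strong aperiodicity (all basic ideals of the form $I_H$) to guarantee that an arbitrary primitive ideal is graded and hence covered by the equivalence.
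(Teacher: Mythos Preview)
Your proposal is correct and follows essentially the same route as the paper: identify $\mathrm{KP}_R(\Lambda)/I_H$ with $\mathrm{KP}_R(\Lambda\setminus H)$ via Lemma~\ref{lem4.5}, then invoke Theorem~\ref{thm4.7}. You supply more detail than the paper does (the primitive-ideal/primitive-quotient correspondence and an explicit argument for the ``in particular'' clause), but the argument is otherwise identical.
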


\begin{proof}
Let $I_{H}$ be a primitive ideal of ${\mathrm{KP}_{R}(\Lambda)}$. Since Lemma \ref{lem4.5} yields that ${\mathrm{KP}_{R}(\Lambda)}/{I_{H}}\cong \mathrm{KP}_{R}(\Lambda\setminus H)$ as rings, $\mathrm{KP}_{R}(\Lambda\setminus H)$ is a primitive ring. Therefore, Theorem \ref{thm4.7} implies that $\Lambda^{0}\setminus H$ satisfies Condition (MT3) and $\Lambda\setminus H$ is aperiodic.

Conversely, let $\Lambda^{0}\setminus H$ satisfy Condition (MT3) and $\Lambda\setminus H$ be aperiodic. By Theorem \ref{thm4.7}, the Kumjian-Pask algebra $\mathrm{KP}_{R}(\Lambda\setminus H)$ is a primitive ring. Since ${\mathrm{KP}_{R}(\Lambda)}/{I_{H}}\cong \mathrm{KP}_{R}(\Lambda\setminus H)$, we conclude that $I_{H}$ is a primitive ideal of $\mathrm{KP}_{R}(\Lambda)$.
\end{proof}

Considering Propositions \ref{prop4.6} and \ref{prop4.8}, we have the following result.

\begin{corollary}
Let $R$ be a field and $\Lambda$ be a row-finite locally convex $k$-graph that is strongly aperiodic. Then every prime ideal of $\mathrm{KP}_{R}(\Lambda)$ is primitive and vice versa.
\end{corollary}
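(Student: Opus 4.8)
The plan is to establish the two inclusions separately, relying almost entirely on Propositions \ref{prop4.6} and \ref{prop4.8}; the only nontrivial direction is that every prime ideal is primitive.

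First I would dispose of the easy inclusion: every primitive ideal is prime. This is the general ring-theoretic fact recalled at the start of Section $3$, namely that every primitive ring is prime. Thus if $P$ is a primitive ideal of $\mathrm{KP}_R(\Lambda)$, the quotient $\mathrm{KP}_R(\Lambda)/P$ is primitive, hence prime, so $P$ is prime. This step uses neither the hypothesis that $R$ is a field nor strong aperiodicity.

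For the converse, let $P$ be a prime ideal of $\mathrm{KP}_R(\Lambda)$. Since $R$ is a field and $\Lambda$ is strongly aperiodic, I would invoke Proposition \ref{prop4.6}(2) to write $P = I_H$ for a saturated hereditary subset $H \subseteq \Lambda^0$ with $\Lambda^0 \setminus H$ a maximal tail. As $I_H$ is a graded ideal, Proposition \ref{prop4.8} supplies the criterion for its primitivity: $I_H$ is primitive precisely when $\Lambda \setminus H$ is aperiodic and $\Lambda^0 \setminus H$ is a maximal tail.

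It then remains to check the aperiodicity clause, and here the hypothesis of strong aperiodicity does all the work. Since $H$ is saturated and hereditary, the definition of strong aperiodicity guarantees that $\Lambda \setminus H$ is aperiodic. Together with $\Lambda^0 \setminus H$ being a maximal tail, Proposition \ref{prop4.8} yields that $I_H = P$ is primitive. I do not anticipate any genuine obstacle: both structural descriptions are handed to us by the preceding propositions, and the single point to verify---that strong aperiodicity applies to this particular $\Lambda \setminus H$---is automatic, because the relevant $H$ is exactly of the saturated hereditary form to which strong aperiodicity refers.
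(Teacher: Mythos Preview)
Your proposal is correct and is exactly the argument the paper intends: the corollary is stated without proof, merely as a consequence of Propositions \ref{prop4.6} and \ref{prop4.8}, and you have spelled out precisely how those two propositions combine under the hypotheses of strong aperiodicity and $R$ being a field.
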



\end{document}